\DeclareSymbolFont{cyrlebTers}{OT2}{wncyr}{m}{n}
\DeclareMathSymbol{\Sha}{\mathalpha}{cyrlebTers}{"58}
\newcommand{\bC}{{\mathbb{C}}}
\newcommand{\bQ}{{\mathbb{Q}}}
\newcommand{\bR}{{\mathbb{R}}}
\newcommand{\bZ}{{\mathbb{Z}}}
\newcommand{\C}{{\mathcal{C}}}
\newcommand{\F}{{\mathcal{F}}}
\newcommand{\G}{{\mathcal{G}}}
\newcommand{\R}{{\mathcal{R}}}
\newcommand{\fB}{\mathfrak{B}}
\newcommand{\GL}{\operatorname{GL}}
\newcommand{\ol}{\overline}
\newcommand{\upchi}{{\raise.35ex\hbox{$\chi$}}}
\newtheorem{theorem}{Theorem}[section]
\newtheorem{proposition}[theorem]{Proposition}
\newtheorem{lemma}[theorem]{Lemma}
\theoremstyle{definition}
\newtheorem{definition}[theorem]{Definition}
\numberwithin{equation}{section}
\begin{document}
	
	\title{Sextic polynomials in two variables do not represent all non-negative integers}
	
	\author{Stanley Yao Xiao}
	\address{Department of Mathematics and Statistics \\
		University of Northern British Columbia \\
		3333 University Way \\
		Prince George, British Columbia, Canada \\  V2N 4Z9}
	\email{StanleyYao.Xiao@unbc.ca}
	\indent

	
	\begin{abstract} In this paper we prove that for all degree $6$ polynomials with rational coefficients that $F(\bZ^2) \ne \bZ_{\geq 0}$. The answers a question of B.~Poonen and J.~S.~Lew in the degree 6 case. This work builds on previous work with S.~Yamagishi but does not subsume it. 
		\end{abstract}
	
	\maketitle
	
	\section{Introduction}
	
	The study of representation of integers by bivariate polynomials go back to ancient times, at least to the time of Bhramagupta who proved in the 7th century that the product of two numbers representable as a sum of two squares is again a sum of two squares. Fermat had stated, without proof, that the odd primes which are sums of two squares are precisely those which are congruent to $1$ modulo $4$. This assertion was proved for the first time by Euler nearly a century later. \\
	
	Legendre, Lagrange, and Gauss investigated sums of more than two squares. Legendre proved that every number not of the form $4^m (8n+7)$ can be written as a sum of three squares, and Lagrange famously proved that every natural number is the sum of four integer squares. Gauss proved that every natural number is the sum of three triangular numbers. \\
	
	Motivated by these classical theorems, B.~Poonen asked on the popular website MathOverflow \cite{Poon} whether there exists a polynomial $F$ in two variables satisfying $F(\bZ^2) = \bZ_{\geq 0}$. It turns out that this question was asked much earlier by J.~S.~Lew \cite{Lew}. \\
	
	In \cite{XY} we addressed the Lew--Poonen question in the case of degree $4$ polynomials in two variables, giving a negative answer. The goal of the present paper is to show that the answer for degree six polynomials is also negative. Clearly, to answer the Ponnen-Lew question it suffices to show that for all polynomials $F \in \bZ[x,y]$ and integers $D_1, D_2$ with $D_1 > 0$, one has $F(\bZ^2) \ne D_1 \bZ + D_2$. To simplify the exposition, let us give the following definition: 
	
	\begin{definition} We say that $F(x,y) \in \bZ[x,y]$ is \emph{arithmetically complete} if there exists a positive integer $D_1$ and an integer $D_2$ such that $F(\bZ^2) = D_1 \bZ_{\geq 0} + D_2$. 
	\end{definition}
	
	Our main theorem is the following:
	
	\begin{theorem} \label{MT} There is no degree 6 polynomial defined over the integers in two variables which is arithmetically complete. 
	\end{theorem}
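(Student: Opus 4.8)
The plan is to adapt the strategy from the author's earlier work with Yamagishi on degree $4$ polynomials, now carried out for sextic forms, where the key arithmetic obstruction comes from a local-to-global analysis at a well-chosen prime. First I would reduce to studying the leading (degree $6$) homogeneous part $F_6(x,y)$ of $F$. By a change of variables over $\bZ$ one may assume $F_6$ is primitive and, after rescaling, monic in one variable or at least of a controlled shape; the growth rate of $F$ on a box $[-N,N]^2$ is then $\asymp N^6$, so the number of integers in $[0,N^6]$ hit by $F$ is at most $O(N^2)$, which is far too sparse to fill an arithmetic progression unless extra structure is present — but density alone does not immediately give a contradiction, so the real argument must be local.

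The heart of the proof should be a congruence obstruction. I would show that for any candidate $(F,D_1,D_2)$ there exists a prime $p$ (depending on $F$) and a residue class modulo a suitable power $p^k$ that lies in $D_1\bZ_{\ge 0}+D_2$ but is never represented by $F(x,y)$ modulo $p^k$. To find such $p$, I would analyze the reduction $\bar F_6 \in \bF_p[x,y]$ and its factorization type: for all but finitely many $p$ the projective zero set of $F_6$ over $\ol{\bF_p}$ governs which residues occur, via point counts on the affine curves $F(x,y)=c$. The degree-$6$ case is genuinely harder than degree $4$ because the generic such curve has higher genus, so one cannot simply invoke an elementary parametrization; instead I expect to use a combination of (i) the classification of possible factorization types of binary sextic forms, handling each type — a perfect $6$th power, a square times a cubic, two cubes, a product of a quadratic and a quartic, six linear factors, etc. — separately, and (ii) Weil-type bounds or explicit descent for the finitely many "generic" types to pin down the density of represented residues mod $p$ and show it is bounded away from $1$ uniformly.

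The key step, and the main obstacle, is exactly this case analysis on the degenerate factorization types of $F_6$, because those are the cases where $F$ can behave almost like a lower-degree polynomial (e.g.\ $F_6 = G_2^3$ makes $F$ resemble a cubic in $G_2$) and the earlier degree-$4$ results may apply only partially — hence the remark in the abstract that this work "builds on but does not subsume" \cite{XY}. For the power and near-power cases I would factor through an auxiliary polynomial of smaller degree and either cite \cite{XY} or rerun its argument; for the cases with a square factor I would use that $F_6 = Q^2 R$ forces $F$ to miss a positive proportion of residues modulo primes dividing the discriminant of $R$, combined with a quadratic-reciprocity style obstruction from the $Q^2$ part. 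For the truly generic (squarefree, not a perfect power) sextics, I would produce the prime $p$ by a Chebotarev-type argument: choosing $p$ so that $\bar F_6$ factors into, say, a linear times an irreducible quintic (or an irreducible sextic) forces the curve $F(x,y)=c$ to have few $\bF_p$-points for many $c$, yielding the required missing residue class. Assembling these local obstructions into a single clean statement covering all degree-$6$ $F$ simultaneously — and verifying that the finitely many exceptional primes can always be dodged — is where the bulk of the technical work lies.
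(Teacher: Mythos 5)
Your proposal rests on finding, for every candidate $(F,D_1,D_2)$, a congruence obstruction: a prime power $p^k$ and a residue class in $D_1\bZ_{\ge 0}+D_2$ that $F$ misses modulo $p^k$. This is the wrong mechanism, and it cannot be made to work. The paper (like the quartic paper \cite{XY}, which also does not argue locally) proves Theorem \ref{MT} by a purely archimedean dichotomy: every sextic $F$ either fails to be \emph{arithmetically positive} (it takes arbitrarily large negative values, detected by Dirichlet approximation along real zeros of $F_6$, or by explicit integer points near a cubic curve), or it is \emph{dearth} (it represents $o(N)$ integers in $[-N,N]$, e.g.\ $O(N^{1/3})$ when $F_6$ is positive definite, or a power saving via Lemma \ref{bdlem} when the values are forced to grow like $\max\{|x|,|y|\}^{1+\delta}$). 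No residue class is ever excluded, and in general none can be: the genuinely hard polynomials, of the Tao shape $F(x,y)=a(y^2-x^3-Ax-B)^2+g(x,y)$ with $\deg g\le 2$, typically represent every residue class modulo every modulus, so your ``heart of the proof'' has nothing to bite on. Worse, your own tool works against you: for large $p$ and generic $c$ the fiber $F(x,y)=c$ is an absolutely irreducible plane curve, so the Weil bound gives it $p+O(\sqrt{p})$ points over $\bF_p$, i.e.\ it shows that essentially \emph{all} residues are represented modulo $p$, rather than producing a missing class. The Chebotarev/factorization-type selection of $p$ therefore cannot yield the obstruction you want.

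What actually closes the hard case is an archimedean construction that your outline has no substitute for: one must produce infinitely many integer points $(x,y)$ with $|y^2-x^3-Ax-B|=O(x^{1/2})$ or $O(x^{1/3})$, so that the (negative) lower-degree part $g$ dominates the squared term and drives $F$ to $-\infty$, contradicting arithmetic positivity. The paper does this via Danilov's Pell-type identity for $|y^2-x^3|$ when $A=0$, and, when $A\ne 0$, via Rouse's construction of the point $3P$ on the curves $y^2=x^3+b_1x+r^2b_1^2$ with $P=(0,rb_1)$, which gives integral points whose defect is only $O(r^2)=O(x^{1/3})$. Your classification of factorization types of $F_6$ is broadly parallel to the paper's case division (squarefree, square factor, fourth power, perfect sixth power), and your instinct to reduce power cases to \cite{XY} matches the treatment of Proposition \ref{MP2}; but the analysis within each case must be about real growth and value density, not about residues, and without the Danilov/Rouse input the decisive case remains open in your plan.
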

	
	As in \cite{XY}, for a subset $\fB \subseteq \bZ$ we define 
	\[\R_F(\fB) = \{n \in \fB : \exists (x,y) \in \bZ^2 \text{ s.t. } F(x,y) = n\}.\]
	R.~Stanley had asked about the asymptotic behaviour of the size of the set $\R_F([N,2N))$ when $F(\bZ^2) \subseteq \bZ_{\geq 0}$. In particular he asked whether the upper bound 
	\begin{equation} \label{stanley} \# \R_F([N, 2N)) = O_F \left(\frac{N}{\sqrt{\log N}} \right)\end{equation} 
	always holds. In \cite{XY} we answered Stanley's question in the affirmative for degree 4 polynomials. Our proof of Theorem \ref{MT} will also answer Stanley's question in the affirmative for degree six polynomials.
	
	\begin{theorem} \label{MT2} For all degree six polynomials $F$ satisfying $F(\bZ^2) \subseteq \bZ_{\geq 0}$ we have 
	\[\# \R_F([N,2N)) = O_F \left(\frac{N}{\sqrt{\log N}} \right).\]
	\end{theorem}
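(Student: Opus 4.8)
The plan is to follow the architecture of \cite{XY}: classify $F$ by the real factorization of its leading sextic form $F_6$, and in each stratum either bound directly the number of lattice points $(x,y)\in\bZ^2$ with $F(x,y)<2N$, or place the value set of $F$ inside an affine image of the set of integers represented by a fixed positive definite binary quadratic form, for which the classical theorem of Landau (sums of two squares) and Bernays (general forms) supplies a count of $O(M/\sqrt{\log M})$ below $M$. Since $F(\bZ^2)\subseteq\bZ_{\geq 0}$ the form $F_6$ is nonnegative on $\bR^2$, so $F_6 = c\prod_i L_i^{2m_i}\prod_j Q_j$ with $c>0$, the $L_i$ distinct real linear forms, $m_i\geq1$, the $Q_j$ positive definite real quadratic forms, and $\sum_i 2m_i+2\cdot\#\{Q_j\}=6$. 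If no $L_i$ occurs then $F_6$ is definite, $F(x,y)<2N$ forces $\|(x,y)\|\ll_F N^{1/6}$, and $\#\R_F([N,2N))\ll_F N^{1/3}=o(N/\sqrt{\log N})$; so assume some real line $L=L_i$ divides $F_6$. Away from fixed neighbourhoods of the finitely many lines $L_i=0$ one still has $F_6\gg_F\|(x,y)\|^6$, contributing only $O_F(N^{1/3})$ values, so it remains to bound the number of values of $F$ on the lattice points of each \emph{finger}, where $|L(x,y)|$ is small compared with $\|(x,y)\|$.

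I would normalize each finger separately. If $L$ is rational a unimodular substitution makes $L=y$; if $L$ is irrational it has multiplicity exactly $2$ (else its Galois conjugate would force $\deg F_6>6$), and combining the Liouville bound $|L(x,y)|\gg\|(x,y)\|^{-1}$ on the relevant lattice points with the nonnegativity of $F$ forces the rational quadratic $L\bar L$ to divide $F-F(0,0)$ down to low degree, which collapses the count in that finger to $O_F(N^{1/2+o(1)})$. So suppose $L=y$, giving $F_6=y^{2m}R(x,y)$ with $R(x,0)\neq0$; every monomial of $F_6$ has $y$-degree at least $2m\geq2$, so the coefficient of $x^iy^k$ in $F$ vanishes whenever $k<2m$ and $i+k=6$, which together with the nonnegativity of each $x\mapsto F(x,y_0)$ (forcing even $x$-degree) gives $\deg_x F(x,y_0)\leq4$ for every $y_0\in\bZ$. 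Now either (a) the coefficient of $x^4$ in $F$, viewed in $\bZ[y]$, is not identically zero, so that $\deg_x F(x,y_0)=4$ for all but finitely many $y_0$; or (b) it is identically zero, which (again using parity) forces the coefficient of $x^3$ to vanish too, so that $F(x,y)=A(y)x^2+B(y)x+C(y)$ identically, with $A$ of even degree and positive leading coefficient (the degenerate cases $A$ constant or $F\in\bZ[y]$ being handled directly, each giving $O_F(N^{1/2})$ values).

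In case (a) each slice obeys $\#\{x:N\leq F(x,y_0)<2N\}\ll_F N^{1/4}$, a slice is empty once $\min_{x\in\bR}F(x,y_0)>2N$, and --- after absorbing any valley of $F$ by a polynomial shear $(x,y)\mapsto(x+\xi(y),y)$ chosen so as not to raise the degree, or else disposing of the resulting ``nearly a perfect square'' configurations in which $F$ becomes a binary form of smaller effective degree --- one checks the number of non-empty slices is $O_F(N^{1/2})$. With the finitely many exceptional slices (where $\deg_x F(x,y_0)\leq2$) contributing $O_F(N^{1/2})$, case (a) yields $O_F(N^{3/4})=o(N/\sqrt{\log N})$; the rarer configurations with $\deg_x F=6$ are even cheaper.

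It remains to treat case (b). If $\deg A\geq2$ then $A(y_0)\gg_F|y_0|^2$ for $|y_0|\gg1$, so $\#\{x:F(x,y_0)<2N\}\ll_F N^{1/2}/|y_0|$, and summing over the $O_F(N^{1/2})$ relevant slices (together with $y_0=0$) gives $O_F(N^{1/2}\log N)=o(N/\sqrt{\log N})$. The crux is $\deg A=0$: writing $A\equiv a\in\bZ_{>0}$ we get $4aF=(2ax+B(y))^2+P(y)$ with $P:=4aC-B^2\in\bZ[y]$, and nonnegativity of $F$ forces $\deg B\leq3$, hence $\deg C=6$ and $\deg P\in\{0,2,4,6\}$. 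If $\deg P\neq2$, then $\R_F([N,2N))$ lies inside $\tfrac1{4a}$ times the sumset $\{u^2:|u|\ll_F N^{1/2}\}+\{P(y):|y|\ll_F N^{1/\max(2,\deg P)}\}$, of size $O_F(N^{3/4})$. If $\deg P=2$, write $P(y)=py^2+qy+r$ with $p>0$ (again by nonnegativity) and put $U=2ax+B(y)$, $V=2py+q$, $D=4pr-q^2$; then $16ap\,F-D=4p\,U^2+V^2$, so the values of $F$ lie in an affine image of the set of integers represented by the positive definite binary quadratic form $4pX^2+Y^2$, and the theorem of Landau and Bernays gives $\#\R_F([N,2N))\ll_F N/\sqrt{\log N}$. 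Assembling the at most three fingers completes the proof. The main obstacle is making this stratification genuinely exhaustive and uniform in $F$: the irrational-factor fingers and, above all, the valley reductions of case (a) --- curved valleys, degree drops, almost-perfect-square forms --- demand careful bookkeeping to keep every normalization within degree $6$; and one must recognize that case (b) with $\deg A=0,\ \deg P=2$ is the unique extremal stratum, realized by $F(x,y)=(x+y^3)^2+y^2$ (whose value set is exactly the set of sums of two squares), so that $N/\sqrt{\log N}$ is attained and cannot be exceeded. It is precisely these strata, with no analogue in degree $4$, that make the present argument extend rather than merely reproduce \cite{XY}.
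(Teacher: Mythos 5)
Your stratification runs into a genuine gap exactly at your case (a), and it is exactly the gap this paper exists to fill. When $F_6=c\,y^6$ (and likewise when $F_6=af^2$ with $f$ an irreducible cubic, after completing the square), the locus where $F$ can be small is not a union of vertical strips or graphs $x=\xi(y)$ with $\xi$ a polynomial: the forced vanishing of the intermediate coefficients leaves, up to swapping $x$ and $y$ to match your normalization $L=y$, the shape $F=a\bigl(x^2-y^3-Ay-B\bigr)^2+g(x,y)$ with $\deg g\le 2$, i.e.\ the shape (\ref{Taoshape}) that Tao flagged as the hard case. No polynomial shear $(x,y)\mapsto(x+\xi(y),y)$ straightens this curved valley, and such $F$ are not compositions $\mathfrak{F}\circ G$ (because of the perturbation $g$), so they are not ``binary forms of smaller effective degree''; your sentence ``one checks the number of non-empty slices is $O_F(N^{1/2})$'' is precisely the unproved point. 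What is actually needed is to decide whether integer points enter the valley at all, i.e.\ whether $|x^2-y^3-Ay-B|$ can be much smaller than its generic size infinitely often at integer points: the paper needs Danilov's Pell-type identity \cite{Dan} when $A=0$ and Rouse's construction \cite{XQ} (the point $3P$ on $y^2=x^3+b_1x+r^2b_1^2$, with $r$ chosen to make $3P$ integral) when $A\neq0$ to produce such points; under your hypothesis $F(\bZ^2)\subseteq\bZ_{\geq0}$ this forces $g\geq0$ on the valley, and only then does a Lemma~\ref{bdlem}-type dichotomy close the count. Your treatment of irrational real factors has the same blind spot: you only contemplate quadratic irrationalities (``the rational quadratic $L\bar L$''), but a real linear factor generating a cubic field occurs when $F_6=af^2$ with $f$ an irreducible cubic, and that case funnels into the same elliptic-curve shape (\ref{ECform}); a Liouville bound alone does not dispose of it, nor does nonnegativity ``force $L\bar L$ to divide $F-F(0,0)$''.

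On the other hand, your case (b) is correct and is the strongest part of the proposal: the parity argument killing the odd $x$-coefficients, the reduction $4aF=(2ax+B(y))^2+P(y)$ with $\deg B\le 3$ and $\deg P$ even, and the identification of $\deg A=0,\ \deg P=2$ as the extremal stratum, bounded via Landau--Bernays and shown sharp by $(x+y^3)^2+y^2$, is cleaner than what the paper does in the corresponding configuration (Section 5 passes to the weighted lead form $\G(x^2,y)$ and asserts indefiniteness, which fails precisely on your extremal stratum, where that form is a multiple of $x^6$). So your route genuinely differs from the paper's and in that one stratum is arguably better; but as written it does not prove Theorem~\ref{MT2}, because the curved-valley configurations of case (a) --- the ones requiring the Danilov/Rouse input --- are asserted rather than proved.
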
 
	
In the case of binary forms, the question of estimating $R_F(N) = \# \R_F([-N,N])$ has been the subject of intense study, going back to the important work of Mahler \cite{Mah} and Erd\"{o}s--Mahler \cite{EM}. In \cite{SX}, C.~L.~Stewart and I proved that whenever $F$ has non-zero discriminant and $\deg F \geq 3$ there exists a positive number $C(F)$ such that 
\[R_F(N) \sim C(F) N^{\frac{2}{d}}.\]

In the case of degree 2, the seminal work of Landau \cite{Lan} and Bernays \cite{Ber} proved that whenever $F$ is an irreducible binary quadratic form that there exists a positive number $C(F)$ such that 
\[R_F(N) \sim C(F) \frac{N}{\sqrt{\log N}}.\]
We will now discuss the approach taken to prove Theorems \ref{MT} and \ref{MT2}. 
	
	\subsection{Outline of proof}
	
	The plan of attack in the present work is very similar to \cite{XY}, but the exposition is different. We will again consider the decomposition of a generic sextic polynomial into homogeneous components, namely
	\[F(x,y) = \sum_{j=1}^6 F_j(x,y),\]
	with each $F_j(x,y) \in \bZ[x,y]$ and homogeneous of degree $j$. Further, in \cite{XY} we essentially showed that all polynomials in two variables fail to be arithmetically complete for one of two reasons. To make this precise, we will make two definitions: 
	
	\begin{definition} We say that a polynomial $F \in \bZ[x,y]$ is \emph{arithmetically positive} if there exists an integer $D$ such that $F(\bZ^2) \subseteq \bZ_{\geq D}$. 
	\end{definition}
	
	\begin{definition} We say that a polynomial $F \in \bZ[x,y]$ is \emph{dearth} if we have the asymptotic relation 
	\[\# \R_F([-N,N]) = o_F(N).\]
	\end{definition}
	
	We note immediately that $F$ is arithmetically complete only if it is arithmetically positive and it is not dearth. \\
	
	In \cite{XY}, we showed that every quartic polynomial in two variables either fails to be arithmetically positive, meaning that $\inf_{(x,y) \in \bZ^2} F(x,y) = -\infty$, or $F$ is dearth. Further, in \cite{XY}, it is shown that the most difficult case corresponds to those $F$ (of degree $4$) taking the shape 
	\[F(x,y) = H(x,y)^{2r} G_4(x,y) + H(x,y) G_3(x,y) + F_2(x,y) + F_1(x,y),\]
	the so-called \emph{quadratically composed} polynomials. Here our situation is quite different. Indeed, the difficult case in the degree six situation is quite distinct from the one encountered in \cite{XY} and requires a different set of tools and ideas to handle. We emphasize that the ideas introduced in this paper do not subsume the ones in \cite{XY}. The main new ingredient, introduced to us by J.~Rouse \cite{XQ}, is Lemma \ref{Rouse}.  \\
	
	Our proof will be achieved through the following propositions: 
	
	\begin{proposition} \label{MP0} Let $F \in \bZ[x,y]$ be a degree six polynomial. Suppose that $F_6$ is square-free as a polynomial over an algebraic closure of $\bQ$. Then $F$ is arithmetically complete. 
	\end{proposition}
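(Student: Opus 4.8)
The plan is to produce an explicit modulus $D_1$ and shift $D_2$ for which $F(\bZ^2)=D_1\bZ_{\geq 0}+D_2$. Since $F_6$ is square-free it has nonzero discriminant, so as a form over $\bR$ it is either definite or indefinite. Replacing $F$ by $-F$ if necessary, I would first arrange that $F_6$ is positive definite (the indefinite case being incompatible with a global lower bound, and so requiring a separate argument for arithmetic positivity). With $F_6$ definite we have $F(x,y)\to+\infty$ as $|(x,y)|\to\infty$, so $F$ attains a finite minimum on $\bZ^2$; taking $D_2$ to be this minimum shows $F$ is arithmetically positive, and taking $D_1=\gcd_{(x,y)\in\bZ^2}\bigl(F(x,y)-D_2\bigr)$ yields the inclusion $F(\bZ^2)\subseteq D_1\bZ_{\geq 0}+D_2$ for free.

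The substance of the argument is the reverse inclusion: every sufficiently large $n$ with $n\equiv D_2\pmod{D_1}$ should be represented. I would attack this by the Hardy--Littlewood circle method applied to the equation $F(x,y)=n$. The square-freeness of $F_6$ is exactly what feeds the local analysis: because the leading sextic form is nonsingular modulo every prime $p$, its $p$-adic representation densities are uniformly bounded below, so the singular series $\mathfrak{S}(n)$ stays bounded away from $0$ along the progression $n\equiv D_2\pmod{D_1}$, while positivity of $F_6$ forces the singular integral to be positive as well. Granting that the major arcs deliver a main term of the predicted size, one concludes $n\in F(\bZ^2)$ for all large $n$ in the progression.

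It then remains to absorb the boundary. The circle method yields every $n\geq N_0$ in the progression; the finitely many residues below $N_0$ are settled by a direct search, and after enlarging $D_2$ by a suitable multiple of $D_1$ and re-minimizing one obtains the exact equality $F(\bZ^2)=D_1\bZ_{\geq 0}+D_2$, that is, arithmetic completeness.

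The hard part---indeed the crux of the entire proposition---is the minor-arc estimate. A sextic form in only two variables lies far below the number of variables for which the circle method is classically known to succeed, and the expected number of representations of a given $n$ is only of order $n^{1/3}$. Securing a genuine lower bound for this count, and in particular ruling out the possibility that the represented integers occupy merely a sparse subset of the progression rather than all of it, is where the difficulty concentrates; this is the step on which the whole argument turns and the one I would expect to demand the essential new input.
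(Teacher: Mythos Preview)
You have been misled by a typo in the stated proposition: in context (see Theorem~\ref{MT} and the list of Propositions~\ref{MP0}--\ref{MP3} that together exhaust all sextics), the conclusion must read ``$F$ is \emph{not} arithmetically complete.'' The paper's proof of Proposition~\ref{MP0} establishes exactly this, and your attempt is aimed at the opposite (false) claim.

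Beyond the misreading, the approach you outline is blocked by an elementary counting obstruction that no amount of circle-method machinery can overcome. When $F_6$ is positive definite there is a constant $c>0$ with $F_6(x,y)\ge c\max(|x|,|y|)^6$, so $F(x,y)\le 2N$ forces $\max(|x|,|y|)\ll N^{1/6}$; consequently
\[
\#\R_F([N,2N)) \le \#\{(x,y)\in\bZ^2:\max(|x|,|y|)\ll N^{1/6}\}=O(N^{1/3}).
\]
Thus the set of represented integers in $[N,2N)$ has density $O(N^{-2/3})$, and $F$ is dearth---it cannot hit an entire arithmetic progression. Your anticipated main term of order $n^{1/3}$ is not a lower bound awaiting a minor-arc saving; it is an \emph{upper} bound on the total number of representations available, and it already precludes surjectivity onto any progression. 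The ``hard part'' you flag is not merely hard but impossible.

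For the remaining possibilities the paper argues just as directly: a square-free sextic form that is not positive definite is either negative definite or indefinite, and in either case one finds $(x_0,y_0)$ with $F_6(x_0,y_0)<0$, whence $F(Nx_0,Ny_0)=N^6F_6(x_0,y_0)+O(N^5)\to-\infty$, so $F$ is not arithmetically positive. Either way $F$ fails to be arithmetically complete, which is the intended content of Proposition~\ref{MP0}.
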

	
	\begin{proposition} \label{MP1} Let $F \in \bZ[x,y]$ be a degree six polynomial. Suppose that $F_6$ is divisible by the square of a binary form but not a cube over an algebraic closure of $\bQ$. Then $F$ is not arithmetically complete. 
	\end{proposition}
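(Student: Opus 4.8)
The plan is to reduce to the statement that every \emph{arithmetically positive} degree-$6$ polynomial $F$ with $F_6$ as in the hypothesis is \emph{dearth}, which settles the proposition since arithmetic completeness implies both arithmetic positivity and the failure of dearth. Concretely I would prove $\#\R_F([-N,N]) = O_F(N^{1/2}\log N)$, which in addition recovers Theorem \ref{MT2} for this class. The starting point is that boundedness below on $\bZ^2$ forces $F_6\ge 0$ on $\bR^2$, hence every real linear factor of $F_6$ has even multiplicity; combined with the hypothesis that $F_6$ is divisible by a square but by no cube, this gives a factorization $F_6 = cG^2W$ over $\bQ$ with $c>0$, $G$ non-constant, $G$ and $W$ square-free and coprime, $W$ with no real linear factor, and each linear factor of $G$ dividing $F_6$ to order exactly $2$ (this is precisely where ``no cube'' is used, and it is what keeps the relevant sublevel regions thin). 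If $G$ has no real linear factor then $\deg G = 2$, $G$ is definite, $F_6(x,y)\gg_F(x^2+y^2)^3$, and $|F(x,y)|\le N$ forces $|x|+|y|\ll_F N^{1/6}$; done. Also, $F$ is constant on a line $L$ only if the form cutting out $L$ divides $F_6$, so there are only finitely many such $L$, all among the real factors of $G$, and they contribute $O(1)$ to $\#\R_F$; set these aside. So I may assume $G$ has a real linear factor $\lambda$.

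Suppose first $\lambda$ may be taken rational; after an $\SL_2(\bZ)$ change of variables take $\lambda=y$, so $y^2\,\|\,F_6$. Writing $F=\sum_i p_i(y)x^i$ with $p_i\in\bZ[y]$, vanishing of the $x^6$- and $x^5y$-coefficients of $F_6$ gives $p_6\equiv 0$ and $p_5$ a constant, and since $F(\,\cdot\,,t)$ is bounded below for each $t\in\bZ$ it can have no nonzero leading coefficient in an odd top degree; this forces $p_5\equiv 0$, then $p_4(t)\ge 0$ for all $t$, with $p_4$ a genuine quadratic whose leading coefficient---the $x^4$-coefficient of $F_6/y^2$---is positive, so $p_4(t)\asymp_F t^2$ for $|t|$ large. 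Thus $F(\,\cdot\,,t)$ is a bounded-below degree-$\le 4$ polynomial in $x$ with leading coefficient $p_4(t)$ and roots of size $O_F(|t|)$; a routine dissection of each fibre by the size of $x$ relative to $t$, using also that $F$ restricted to each line cut out by a factor of $G$ is a bounded-below univariate polynomial (hence constant or of degree $\ge 2$), shows the non-empty fibres have $|t|\ll_F N^{1/2}$ and contribute $O_F(N^{1/2})$ in total to $\#\R_F([-N,N])$.

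The harder case is that every real linear factor of $G$ is irrational. Then $y\nmid F_6$, so the $x^6$- and $y^6$-coefficients of $F_6$ are positive and $F\asymp_F F_6\gg_F(x^2+y^2)^3$ away from the $\le 3$ lines $\{\mu=0\}$ on which $F_6$ vanishes, confining that part to $|x|+|y|\ll_F N^{1/6}$. Near such a line, $\mu\mid G$ gives $\mu^2\,\|\,F_6$, hence $F_6(x,y)\asymp_F\mu(x,y)^2(x^2+y^2)^2$, so at dyadic scale $|x|+|y|\asymp X$ the set $\{|F|\le N\}$ lies in a tube about $\{\mu=0\}$ of width $O(N^{1/2}/X^2)$; since $\mu$ has algebraic slope of bounded degree, the elementary ``area-plus-perimeter'' bound for lattice points in convex regions handles each tube and, summed over dyadic scales, contributes $O_F(N^{1/2}\log N)$.

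The main obstacle, where the bulk of the work lies, is the innermost sliver of each of these tubes, in which $F_6$ is dominated by the lower-degree homogeneous parts of $F$. There I would pass to (in general irrational) coordinates $(\mu,s)$ along and transverse to $\{\mu=0\}$, expand $F=\sum_{k\ge 0}\mu^k\phi_k(s)$, and exploit two facts forced by arithmetic positivity---that $\phi_0$ has degree $\le 4$ and that $\phi_2$ has degree $4$ with positive leading coefficient---together with the lattice points that Dirichlet's theorem places arbitrarily far out and arbitrarily near $\{\mu=0\}$ on either side, along which $F$ must stay bounded below; chasing the consequences pins the sliver's contribution to $\#\R_F$ down to $O_F(N^{1/2})$, except in a short list of degenerate normal forms in which $F$ is---after a substitution and an additive shift---a binary form of degree $\ge 2$ or a quadratically composed quartic-type polynomial, each known to be dearth by \cite{SX}, \cite{Lan}, \cite{Ber}, \cite{XY} and in any event not arithmetically complete. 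Assembling all contributions yields $\#\R_F([-N,N]) = O_F(N^{1/2}\log N) = o_F(N)$, so $F$ is dearth and hence not arithmetically complete.
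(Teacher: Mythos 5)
Your reduction to ``arithmetically positive $\Rightarrow$ dearth'' and the tube/sliver geometry around the real zero lines of $F_6$ is reasonable in spirit, but the endgame is missing the one case that makes Proposition \ref{MP1} hard, and it is exactly the case the paper introduces its main new tool for. When the square divisor of $F_6$ is an \emph{irreducible cubic} $f$, i.e.\ $F_6 = a f(x,y)^2$ (which satisfies ``square but no cube''), your sliver analysis around each irrational line of $f$, driven by Dirichlet approximation, only controls the neighbourhood of those lines; after one extracts the forced divisibilities $f \mid F_5$, $f \mid F_4$ and completes the square, $F = a\bigl(f(x,y)+g(x,y)\bigr)^2 + (\text{terms of degree} \le 3)$, and the dangerous region is a neighbourhood of the \emph{genus-one cubic curve} $f+g+m=0$, not of a line. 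The resulting normal form is, after a rational change of coordinates, $a\bigl(y^2 - x^3 - Ax - B\bigr)^2 + \G(x,y)$ with $\deg \G \le 4$; this is neither a binary form nor a quadratically composed quartic-type polynomial, so your list of degenerate normal forms ``known to be dearth by \cite{SX}, \cite{Lan}, \cite{Ber}, \cite{XY}'' does not cover it, and no Dirichlet-type or lattice-point-in-tube argument produces the integer points with $|y^2 - x^3 - Ax - B|$ small that are needed to decide positivity there. The paper settles this via Lemma \ref{Rouse}: Rouse's construction of $3P$ on the curves $E_r: y^2 = x^3 + b_1x + r^2b_1^2$ when $b_1 \ne 0$, and Danilov's identity when $b_1 = 0$, giving infinitely many integral points with $|y^2 - x^3 - b_1x - b_0| = O(x^{1/2})$; without an ingredient of this kind your argument cannot conclude in this subcase.

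Secondarily, several steps you label routine conceal the structural work the paper actually does, and one intermediate claim is false as stated: in the rational-line case, ``non-empty fibres have $|t| \ll N^{1/2}$'' fails, e.g., when $F$ is constant on a line transverse to the fibres (such as $F = \bigl((x-y)(xy+y^2-1)\bigr)^2 + (x-y) + c$), and more seriously the fibre minimum can only be shown to grow after establishing the divisibility of $F_5$, $F_4$ by the repeated factor (the paper's Lemma \ref{gcd} and its consequences) and completing the square; similarly, in the quadratic-factor case the paper's counting hinges on the Liouville-type lower bound $|x - \sqrt{k}\,y - \beta| \gg 1/|y|$ for the quadratic irrational root, which your sketch would need but does not mention. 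These could plausibly be filled in along the lines of \cite{XY}, but the cubic-square case above is a genuine gap in the approach, not just in the exposition.
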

	
	\begin{proposition} \label{MP2} Let $F \in \bZ[x,y]$ be a degree six polynomial. Suppose that $F_6$ is divisible by the fourth power of a binary form but not a fifth power over an algebraic closure of $\bQ$. Then $F$ is not arithmetically complete. 
	\end{proposition}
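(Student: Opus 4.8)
The plan is to prove the stronger statement that every such $F$ which is arithmetically positive is dearth; since an arithmetically complete polynomial must be arithmetically positive and not dearth, Proposition~\ref{MP2} follows. So assume $F(\bZ^2) \subseteq \bZ_{\geq D}$ for some $D$. First I would pin down the shape of $F_6$. Along a rational ray $t \mapsto (tp,tq)$ one has $F(tp,tq) = F_6(p,q)\,t^6 + O(t^5)$, so arithmetic positivity forces $F_6 \geq 0$ on $\bQ^2$ and hence on $\bR^2$. A fourth power of a form of degree $\geq 2$ already has degree $> 6$, so the hypothesis reads $F_6 = L^4 M$ with $L$ linear, $M$ a binary quadratic form, and $L \nmid M$ (otherwise $L^5 \mid F_6$); a Galois conjugate of $L$ would contribute a second factor $L'^{4}$ and push $\deg F_6 \geq 8$, so $L$ is proportional to a form over $\bQ$. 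Clearing content and applying a suitable element of $\GL_2(\bZ)$, which leaves $F(\bZ^2)$ unchanged, I may take $L = x$; then $F_6 = x^4 M(x,y)$ with $M$ a nonzero positive semidefinite binary quadratic form and $x \nmid M$. (By the same reductions as in \cite{XY}, one may also clear the remaining harmless case where $M$ is itself the square of a form, passing through $\GL_2(\bQ)$ and rescaling the variables.)

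The technical heart is to control $F$ itself, not merely $F_6$. Since $x^4 \mid F_6$, the coefficients of $y^6,y^5,y^4,y^3$ in the one–variable polynomial $y \mapsto F(x_0,y)$ are supplied entirely by $F_5,F_4,F_3$ and are polynomials in $x_0$ of degree at most $3$. Using that $y\mapsto F(x_0,y)$ is bounded below for every $x_0 \in \bZ$, I would run the resulting cascade of vanishing conditions — first the $y^5$–coefficient, then the $x_0$–linear part of the $y^4$–coefficient, and, if the $y^4$–coefficient is then identically zero, the whole $y^3$–coefficient — to conclude that, after the normalisation, $F$ has one of the two shapes
\[
F(x,y) = b\,y^4 + (\text{lower order in } y),\quad b>0,\qquad\text{or}\qquad F(x,y) = q(x)\,y^2 + \ell(x)\,y + r(x),
\]
where in the second case $\deg q = 4$ with positive leading coefficient and $q \geq 0$ on $\bZ$, and where boundedness below of $F$ on all of $\bZ^2$ — via the vertex $r(x) - \ell(x)^2/\bigl(4q(x)\bigr)$ of each fibre — forces $\deg\bigl(\ell(x)^2 - 4q(x)r(x)\bigr) \leq 4$. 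The lines on which $F_6$ degenerates, namely $x=0$ and the zero line of $M$ when $M$ is a square, are disposed of by the same bounded–below argument once written in the right coordinates.

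Finally I would count lattice points. In the first shape, comparing $b\,y^4$ with the lower–degree–in–$y$ terms, whose coefficients are $O_F$ of a power of $x$ dictated by $F_6 = x^4 M$, shows that $F(x,y) \leq N$ confines $(x,y)$ to a region containing only $O_F(N^{1/2})$ lattice points, whence $\#\R_F([1,N]) = o_F(N)$. In the second shape, for each fixed $x$ the set $\{F(x,y): y \in \bZ\}$ is a dilated and shifted progression of squares with $\ll_F (N/q(x))^{1/2}$ elements in $[1,N]$; summing over the relevant range $|x| \ll_F N^{1/4}$ gives $\sum_{x} (N/q(x))^{1/2} \ll_F N^{1/2}$, which is exactly the estimate packaged by Lemma~\ref{Rouse}. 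Either way $\#\R_F([1,N]) = o_F(N)$, so $F$ is dearth and not arithmetically complete; the same bound sits well inside the $O_F(N/\sqrt{\log N})$ required for Theorem~\ref{MT2}.

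The step I expect to be the main obstacle is the cascade of the second paragraph: it is a branching case analysis rather than a single computation, and at each branch one must check that the boundedness hypothesis genuinely kills the offending coefficient — using that it holds for every integer $x_0$, hence as a polynomial identity — rather than merely on a finite set of $x_0$. A secondary but real point is verifying that the normalisation to $F_6 = x^4 M$ is legitimate, i.e.\ that being arithmetically complete and being dearth are insensitive to $\GL_2(\bQ)$–substitutions and to integer rescalings of the variables, for which I would lean on the corresponding lemmas in \cite{XY}.
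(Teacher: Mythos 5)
Your fibre-wise cascade (boundedness of $y \mapsto F(x_0,y)$ for every integer $x_0$) is fine as far as it goes, and the resulting dichotomy between a shape with a constant positive $y^4$-coefficient and a shape $q(x)y^2+\ell(x)y+r(x)$ is correct. The gap is everything after that. First, the structural claim in shape (b) is false: boundedness below at \emph{integer} points does not see the real vertex $r-\ell^2/(4q)$, because the integers can systematically miss it; it only yields $\ell(x)^2-4q(x)r(x) \ll q(x)^2 \asymp x^8$, not degree $\leq 4$. Concretely, $F(x,y)=x^4y(y-1)+c$ and $F(x,y)=x^4(y-x)^2+y^2$ both satisfy the hypotheses of the proposition and are bounded below, yet have $\ell^2-4qr$ of degree $8$ and $6$ respectively. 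With that claim gone, the asserted range $|x|\ll N^{1/4}$ in your count is also unjustified: fibre minima need not grow like $x^4$ (they can stay bounded, as in $x^4y(y-1)+c$), so fibres with $|x|$ up to order $N^{1/2}$ and beyond can contribute, and the ``$+1$ per fibre'' term in your sum is not controlled without an additional argument about which fibres actually meet $[N,2N)$.

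Second, and more fundamentally, the one-sentence treatment of shape (a) skips exactly the case the paper has to work for. When the weighted leading form $a_2x^4+a_1x^2y+a_0y^2$ degenerates, i.e.\ is proportional to $(\alpha_1x^2-\alpha_2y)^2$, the sublevel sets $\{F\leq N\}$ can contain entire parabolas of lattice points (e.g.\ $F(x,y)=y^2(x^2-y)^2+c$ contains all of $\{y=x^2\}$ in every sublevel set), so the claim that $F(x,y)\leq N$ confines $(x,y)$ to $O_F(N^{1/2})$ lattice points is false as stated, and bounding values by sublevel-set points cannot give dearth without exploiting the structure of $F$ along the parabola $y\approx (\alpha_1/\alpha_2)x^2$. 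Your cascade, which only varies $y$ with $x$ frozen, never probes this direction. The paper's proof lives precisely there: it shows the weighted lead form must be a perfect square, completes the square, uses a sign analysis on the parabola to force the complementary term $B\equiv 0$ and to restrict $G_5$, and then substitutes $y=\frac{\alpha_1}{\alpha_2}x^2+\frac{\beta_2\alpha_1-\beta_1\alpha_2}{\alpha_1}x+t$ to reduce to a quartic-type polynomial handled by the main theorem of \cite{XY} (whose bound is $N/\sqrt{\log N}$, not $N^{1/2}$ — another sign that a bare lattice-point count cannot close this case). Until you supply an argument along the degenerate parabola (or an equivalent reduction), the proposal does not prove the proposition.
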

	
	\begin{proposition} \label{MP3} Let $F \in \bZ[x,y]$ be a degree six polynomial. Suppose that $F_6$ a perfect 6th power over an algebraic closure of $\bQ$. Then $F$ is not arithmetically complete. 
	\end{proposition}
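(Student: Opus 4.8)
The plan is to show the stronger statement that every such $F$ is either not arithmetically positive or dearth; by the remark following the definition of dearth this implies $F$ is not arithmetically complete.

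\emph{Normalization.} Since $F_6\in\bZ[x,y]$ is a perfect sixth power over $\ol\bQ$, its unique zero in $\bP^1$ is Galois-stable, hence rational, so $F_6=a\,\ell^6$ with $\ell$ a primitive integral linear form and $a\in\bZ\setminus\{0\}$. Applying the element of $\GL_2(\bZ)$ sending $\ell$ to $x$ (a bijection of $\bZ^2$, hence preserving arithmetic completeness, positivity and dearth) we may assume $F(x,y)=a x^6+(\text{terms of total degree}\le 5)$. If $a<0$ then $F(x,0)\to-\infty$, so $F$ is not arithmetically positive; thus assume $a>0$. The crucial consequence is $\deg_y F=:d\le 5$, because the coefficient of $y^6$ in $F$ is that of $F_6$, namely $0$. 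Write $F(x,y)=\sum_{j=0}^{d}c_j(x)y^j$ with $c_d\not\equiv 0$; the total degree bound gives $\deg c_j\le 6-j$. If $d$ is odd, almost every slice $y\mapsto F(n,y)$ has odd degree and is unbounded below, so $F$ is not arithmetically positive; if $d=0$ then $F\in\bZ[x]$ is a univariate sextic and $\#\R_F([-N,N])\ll_F N^{1/6}=o(N)$, so $F$ is dearth. Hence $d\in\{2,4\}$, and for arithmetic positivity $c_d$ must be positive on all large $|x|$, forcing $\deg c_d$ even with positive leading coefficient.

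\emph{The function $m(x)$ and a trichotomy.} Put $m(x)=\min_{y\in\bR}F(x,y)$, well-defined for large $|x|$. A Newton-polygon analysis of the critical value of the degree-$d$ polynomial $F(x,\cdot)$ yields three cases as $|x|\to\infty$. (i) $m(x)\to-\infty$ rapidly enough that rounding the real minimizer to a nearest integer $y_0$ still gives $F(x,y_0)\to-\infty$: then $F$ is not arithmetically positive. (ii) $m(x)$ grows at least cubically: then $|x|=o(N^{1/3})$ on $\Omega_N:=\{(x,y)\in\bZ^2:F(x,y)\le N\}$, and since each such $x$ admits $\ll (N/c_d(x))^{1/d}+1\ll N^{1/2}$ admissible $y$, we get $\#\R_F([-N,N])\le\#\Omega_N=o(N)$. (iii) $m(x)$ grows at most quadratically with non-negative leading behaviour: when $d=4$, the quartic slice count $\ll N^{1/4}$ together with $|x|\ll N^{1/2}$ on value-producing slices gives $\#\R_F([-N,N])\ll N^{3/4}=o(N)$ (Lemma \ref{Rouse} handling the most degenerate subcase, where the quartic in $y$ is forced to be essentially the square of a quadratic); and when $d=2$ with $c_2$ non-constant, $\deg c_2\in\{2,4\}$ forces $\sum_{|x|\ll N^{1/2}}(N/c_2(x))^{1/2}\ll N^{1/2}\log N=o(N)$. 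The one remaining configuration is $d=2$ with $c_2$ a positive constant.

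\emph{The hard case.} Completing the square gives exactly $F(x,y)=c_2\bigl(y-B(x)\bigr)^2+C(x)$ with $B,C\in\bQ[x]$ and $C(x)=m(x)$; arithmetic positivity rules out $\deg C$ odd or $C$ with negative leading coefficient, $\deg C=0$ is dearth by a trivial count, and $\deg C\ge 4$ falls under case (ii), so $\deg C=2$ with positive leading coefficient. Completing the square in $x$ as well, $F(x,y)=c_2\bigl(y-B(x)\bigr)^2+e\bigl(x-f\bigr)^2+g$ with $e>0$; clearing a bounded denominator realizes $\R_F([-N,N])$, up to a fixed integer scaling, inside a bounded union of value sets of positive-definite binary quadratic forms restricted to fixed congruence classes, and by Bernays's theorem \cite{Ber} each of these contains $O(N/\sqrt{\log N})$ integers up to $N$ (the congruence restrictions only decreasing the count). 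Hence $\#\R_F([-N,N])=O_F(N/\sqrt{\log N})=o(N)$: $F$ is dearth, which also yields Theorem \ref{MT2} in this case. The main obstacle is precisely this configuration — the degree-$6$ analogue of the ``quadratically composed'' case of \cite{XY}, in which $\#\Omega_N$ has exact order $N$, so that only a count of distinct \emph{values} can work — and it is here, together with the degenerate $d=4$ subcase, that Lemma \ref{Rouse} is essential. Carrying out the Newton-polygon case analysis of $m(x)$ in full detail, and the coset/content bookkeeping in the hard case, are the technical heart; the remaining steps are routine.
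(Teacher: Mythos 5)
There is a genuine gap, and it sits exactly where the paper's real difficulty lies. Your trichotomy on $m(x)=\min_{y\in\bR}F(x,y)$ is not exhaustive: case (i) requires that rounding the real minimizer to an integer still drives $F$ to $-\infty$, while cases (ii) and (iii) assume $m(x)$ grows (cubically, resp.\ at most quadratically with non-negative leading behaviour). The configuration that survives your reductions with $d=4$ and $c_4$ constant is, after the normalizations carried out in the paper, precisely $F(x,y)=a\bigl(y^2-x^3-b_1x-b_0\bigr)^2+\G(x,y)$ as in \eqref{ECform}, with $\G$ of low degree. If $\G$ is negative on the region $y^2\approx x^3$, then $m(x)\to-\infty$ (take $y$ real with $y^2=x^3+b_1x+b_0$), so you are not in (ii) or (iii); but rounding the real minimizer to the nearest integer $y_0$ gives $|y_0^2-x^3-b_1x-b_0|\asymp x^{3/2}$ generically, hence $F(x,y_0)\asymp x^3\to+\infty$, so you are not in (i) either. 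Deciding whether such an $F$ is arithmetically positive is a Hall-conjecture-type question: one must produce infinitely many integer points with $|y^2-x^3-b_1x-b_0|$ far smaller than the trivial $x^{3/2}$ (of size $O(x^{1/2})$ or so, below $|\G|^{1/2}$ on that region). This is the content of Lemma \ref{Rouse}, proved via Danilov's identity \cite{Dan} when $b_1=0$ and Rouse's construction \cite{XQ} of the point $3P$ on $E_r:y^2=x^3+b_1x+r^2b_1^2$ when $b_1\ne 0$; it is a statement about non-positivity through integral points near an elliptic curve, not a counting device. Your proposal relegates it to a parenthetical inside case (iii), where counting already suffices and the lemma plays no role, so the decisive step of the proof is simply absent.

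Two further points. First, even within case (iii) the bound $|x|\ll N^{1/2}$ presupposes $m(x)\gg x^2$; if $m(x)$ is bounded or grows like $x^{\rho}$ with $\rho\le 4/3$ (which your hypotheses allow), the $N^{3/4}$ count evaporates, and one cannot in general salvage it by lattice-point counting alone -- this is why the paper routes the negative-$\G$ case through non-positivity and the positive-$\G$ case through Lemma \ref{bdlem}. Second, the case you single out as ``the hard case'' ($d=2$, $c_2$ constant, handled by completing squares and Bernays \cite{Ber}) is the analogue of the quadratically composed case of \cite{XY} and is comparatively routine here; the paper explicitly stresses that the degree-six difficulty is of a different nature. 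Your normalization, parity argument, and the Bernays endgame are fine as far as they go, but without an argument covering the elliptic-curve configuration the proposed proof of Proposition \ref{MP3} is incomplete.
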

	
	Plainly, Propositions \ref{MP0} to \ref{MP3} exhaust all possible degree six polynomials in two variables defined over the integers, and thus implies Theorem \ref{MT}. \\
	
	Propositions \ref{MP0} and \ref{MP2} are essentially elementary, with Propositions \ref{MP1} and \ref{MP3} being more difficult to prove. The difficulty in the proofs of Propositions \ref{MP1} and \ref{MP3} is dealing with a type of polynomial similar to an example given by T.~Tao in \cite{Poon}. In particular, he stated that \footnote{``As such, a polynomial such as $f(x,y) = (x^2 - y^3 - y)^4 - y + C$ for some large constant $C$ already looks very tough to analyse." - T.~Tao, \url{https://mathoverflow.net/questions/9731/polynomial-representing-all-nonnegative-integers} } the polynomial
	\begin{equation} f(x,y) = (x^2 - y^3 - y)^4 - y + C, C \in \bZ
	\end{equation}
	looks difficult to analyse. This is similar to the most difficult case that needs to be dealt with for the proof of Proposition \ref{MP3}. Such polynomials take the form
	\begin{equation} \label{Taoshape} F(x,y) = a(y^2 - x^3 - Ax - B)^2 + g(x,y)
	\end{equation}
	where $\deg g \leq 2$. \\
	
	The main insight needed to tackle this difficult case differs depending on whether $A = 0$ or $A \ne 0$. In the former case, a Pell-type identity due to L.~V.~Danilov \cite{Dan} addressing Hall's conjecture is enough to show that such polynomials cannot be arithmetically complete. When $A \ne 0$, a construction due to J.~Rouse \cite{XQ} allows us to overcome this issue; see the proof of Lemma \ref{rouse}. 
	
	\section{Preliminary lemmas and proof of Proposition \ref{MP0}} 
	
We recall the following lemmata from \cite{XY}: 

\begin{lemma} \label{single}
Suppose there exists a polynomial $\mathfrak{F} \in \bQ[t]$ of degree $d \geq 2$ and $G \in \bQ[x,y]$ such that $F = \mathfrak{F} \circ G$. Then
\[\# \R_F([N, 2N]) \ll N^{\frac{1}{d}}.\]
\end{lemma}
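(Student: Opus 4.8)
The plan is to exploit that composition collapses the range: since $F=\mathfrak{F}\circ G$, every value attained by $F$ is a value attained by the single-variable polynomial $\mathfrak{F}$, and the values of $\mathfrak{F}$ lying in a dyadic window $[N,2N]$ can only be produced by arguments $t$ of size $\ll N^{1/d}$. Since $G$ has rational coefficients, it takes values in a fixed lattice $\tfrac{1}{Q}\bZ$ on $\bZ^2$, so only $O(N^{1/d})$ admissible arguments $t$ are available, and each produces at most one value of $F$.

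Concretely, I would first choose a positive integer $Q$ that is a common denominator for the coefficients of $G$, so that $G(\bZ^2)\subseteq \tfrac{1}{Q}\bZ$. Write $d=\deg\mathfrak{F}\ge 2$ and let $a_d\ne 0$ be the leading coefficient of $\mathfrak{F}$. Since $|\mathfrak{F}(t)|\sim |a_d|\,|t|^d$ as $|t|\to\infty$, there are constants $T_0,C$ depending only on $\mathfrak{F}$ such that, for $N\ge 1$, the condition $\mathfrak{F}(t)\in[N,2N]$ forces $|t|\le \max\bigl(T_0,\,C N^{1/d}\bigr)$. Hence the set
\[
S_N := \bigl\{\, t\in\tfrac{1}{Q}\bZ \;:\; \mathfrak{F}(t)\in[N,2N]\,\bigr\}
\]
has cardinality $\#S_N \le 2QC\,N^{1/d} + O_{\mathfrak{F},Q}(1)\ll N^{1/d}$ for all $N\ge 1$.

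Finally I would observe that if $n\in\R_F([N,2N])$, witnessed by $(x,y)\in\bZ^2$ with $F(x,y)=n$, then $t:=G(x,y)$ lies in $\tfrac{1}{Q}\bZ$ and satisfies $\mathfrak{F}(t)=n\in[N,2N]$, so $t\in S_N$. Therefore $\R_F([N,2N])\subseteq \mathfrak{F}(S_N)$, and consequently $\#\R_F([N,2N])\le \#S_N\ll N^{1/d}$, which is the desired bound. (The degenerate case where $G$ is constant is subsumed, giving a trivial estimate.) There is no genuine obstacle here; the only step requiring a moment of care is the elementary growth estimate for $\mathfrak{F}$, which is precisely where the hypothesis that $\mathfrak{F}$ is nonconstant (indeed $d\ge 2$) is used to guarantee that $\mathfrak{F}$ is proper.
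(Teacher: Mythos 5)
Your proof is correct and is essentially the standard argument behind this lemma (which the paper only recalls from \cite{XY} without reproving): every value of $F$ in $[N,2N]$ is $\mathfrak{F}(t)$ for some $t \in \tfrac{1}{Q}\bZ$ with $|t| \ll N^{1/d}$, and counting such $t$ gives the bound. No gaps; the clearing-of-denominators step and the growth estimate for $\mathfrak{F}$ are exactly the points that need to be (and are) handled.
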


\begin{lemma} \label{posdef lem}
Suppose $F_6$ is  positive definite. Then
$$
\# \mathcal{R}_F([N, 2N]) \ll N^{\frac{1}{3}}.
$$
\end{lemma}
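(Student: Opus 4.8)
The plan is to use the positive definiteness of the leading form $F_6$ to confine the integer points at which $F$ takes values in $[N,2N]$ to a disc of radius $O(N^{1/6})$, and then estimate $\#\R_F([N,2N])$ trivially by the number of such lattice points. Indeed, since $(x,y)\mapsto F(x,y)$ sends each integer point to a single integer, $\#\R_F([N,2N])$ is at most $\#\{(x,y)\in\bZ^2 : F(x,y)\in[N,2N]\}$, so it suffices to bound the latter by $O(N^{1/3})$.

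First I would invoke compactness on the unit circle: because $F_6$ is a positive definite binary form of degree $6$, there is a constant $c_1=c_1(F)>0$ with $F_6(x,y)\geq c_1(x^2+y^2)^3$ for all $(x,y)\in\bR^2$. Next, each lower component $F_j$ with $1\leq j\leq 5$ satisfies the crude bound $|F_j(x,y)|\leq c_2(x^2+y^2)^{j/2}$ for some $c_2=c_2(F)$, and summing gives $\bigl|\sum_{j=1}^5 F_j(x,y)\bigr|\leq c_3\bigl(1+(x^2+y^2)^{5/2}\bigr)$ for a suitable $c_3=c_3(F)$. Combining, $F(x,y)\geq c_1(x^2+y^2)^3-c_3\bigl(1+(x^2+y^2)^{5/2}\bigr)$, which tends to $+\infty$ as $x^2+y^2\to\infty$; hence there is $c_4=c_4(F)$ such that $F(x,y)\leq 2N$ forces $x^2+y^2\leq c_4N^{1/3}$. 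Thus every integer point contributing to $\R_F([N,2N])$ lies in a disc of radius $O(N^{1/6})$, and the number of integer points in a disc of radius $R$ is $O(R^2+1)=O(N^{1/3})$. This yields $\#\R_F([N,2N])\ll N^{1/3}$.

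There is no real obstacle here; the argument is a standard dominant-term estimate. The only points requiring mild care are that all implied constants depend only on $F$ — through the lower bound $c_1$ for $F_6$ on the unit circle and through the coefficient sizes of $F_1,\dots,F_5$ — and that the passage from $F(x,y)\leq 2N$ to $x^2+y^2\ll N^{1/3}$ is uniform in $N$, which is immediate once $N$ is large enough that the $(x^2+y^2)^3$ term dominates.
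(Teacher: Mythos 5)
Your argument is correct and is essentially the same as the paper's: both use the positive definiteness of $F_6$ to dominate the lower-order terms and confine the points with $F(x,y)\in[N,2N]$ to a region of diameter $O(N^{1/6})$, then count lattice points trivially. The only cosmetic difference is your use of the Euclidean norm $x^2+y^2$ in place of the paper's $\max\{|x|,|y|\}$, which changes nothing.
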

	
	\begin{proof}
Since $F_6$ is positive definite, there exists a  constant $c > 0$ such that
	\[F_6(x,y) \geq c \max\{|x|, |y|\}^6  \]
	for all $(x,y) \in \bZ^2$.
It is clear that
$$
F(x,y) =  F_6 (x,y) + O( \max\{|x|, |y|\}^5  ).
$$
Let $N$ be a sufficiently large positive integer.
Then we obtain
\begin{eqnarray}
\# \mathcal{R}_F([N, 2N])
\notag
&=&
\# \{(x,y) \in \bZ^2 : N \leq F(x,y) \leq 2 N\}
\\
\notag
&\leq& \# \left\{(x,y) \in \bZ^2 : \max\{|x|, |y|\}  \ll N^{\frac{1}{6}}\right\}
\\
\notag
&\ll& N^{\frac{1}{3}}.
\end{eqnarray}

\end{proof}

Thus, Proposition \ref{MP0} is proved in the case when $F_6$ is positive definite. The indefinite case follows from another lemma in \cite{XY}: 

\begin{lemma} \label{indef lem} Suppose $F_{6}$ is negative semi-definite or indefinite. Then
$$
\inf_{(x,y) \in \bZ^2} F(x, y) = - \infty.
$$
\end{lemma}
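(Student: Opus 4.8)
The plan is to reduce the statement to a one-variable observation by restricting $F$ to a suitable line through the origin. First I would note that since $F$ has degree $6$ the form $F_6$ is not the zero polynomial, and that a nonzero binary form which is negative semi-definite, or which is indefinite, necessarily takes a strictly negative value at some real point: in the indefinite case this is the definition, and in the negative semi-definite case it holds because the real zero locus of a nonzero binary form is a finite union of lines through the origin, off of which $F_6$ is nonzero and hence, being $\le 0$ everywhere, strictly negative. Thus $\{(x,y)\in\bR^2 : F_6(x,y)<0\}$ is a nonempty open subset of $\bR^2$; by density of $\bQ^2$ in $\bR^2$ it contains a rational point, and clearing denominators yields $(p,q)\in\bZ^2\setminus\{(0,0)\}$ with $F_6(p,q)<0$.

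Next I would consider the polynomial $\varphi(t) := F(tp,tq)$. Writing $F=\sum_{j=1}^6 F_j$ with $F_j$ homogeneous of degree $j$, homogeneity gives $\varphi(t)=\sum_{j=1}^6 F_j(p,q)\,t^j$, which is a polynomial in $t$ of degree exactly $6$ with leading coefficient $F_6(p,q)<0$. Hence $\varphi(t)\to-\infty$ as $t\to+\infty$ along the positive integers. Since $(tp,tq)\in\bZ^2$ for every $t\in\bZ$, it follows that $\inf_{(x,y)\in\bZ^2}F(x,y)\le \inf_{t\in\bZ_{>0}}\varphi(t)=-\infty$, which is exactly the assertion of the lemma.

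There is no real obstacle in this argument; the only step deserving a sentence of care is the first one, the passage from ``$F_6$ is negative semi-definite or indefinite'' to ``$F_6$ is strictly negative at some rational point.'' If one prefers to avoid the topological phrasing, the same conclusion follows by factoring $F_6$ over $\bR$ into real linear factors and positive-definite real quadratic factors and reading off directly a rational direction in which the sign of $F_6$ is negative.
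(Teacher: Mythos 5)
Your proof is correct and follows essentially the same route as the paper: locate an integer point $(p,q)$ with $F_6(p,q)<0$ and let the degree-$6$ term dominate along the ray $(tp,tq)$ as $t\to\infty$. Your extra care in justifying that a nonzero negative semi-definite or indefinite form is strictly negative at some rational direction is a welcome elaboration of a step the paper leaves implicit, but it is not a different argument.
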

\begin{proof}
Since $F_{6}$ is negative definite or indefinite, there exists $(x_0, y_0) \in \bZ^2$ such that
$$
F_{6}(x_0, y_0) < 0.
$$
It is clear that
$$
F(N x_0, N y_0) = N^6 F_6(x_0, y_0) + O(N^5),
$$
and the result follows.
\end{proof}

The proof of Proposition \ref{MP0} follows from the observation that a square-free sextic form $F_6$ is either positive definite, negative definite, or indefinite.

\section{Proof of Proposition \ref{MP1}}

Next we address the problem when $F_6$ is divisible by the square of a binary form but not a $4$-th power, say $F_6(x,y) = f(x,y)^2 G_6(x,y)$ with $f$ irreducible and $G_6$ co-prime to $f$. Note that $\deg f \leq 6/2 = 3$. We may further suppose, by Lemmas \ref{posdef lem} and \ref{indef lem}, that $F_6$ is positive semi-definite but not definite. \\

We will require the following adaptation of Lemma 3.1 from \cite{XY}: 

\begin{lemma} \label{gcd}
Suppose $F_{6}$ is positive semi-definite  and $\gcd(F_6, F_{5}) = 1$.
Then
$$
\inf_{(x,y) \in \bZ^2} F(x, y) = - \infty.
$$
\end{lemma}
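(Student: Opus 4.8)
The plan is to adapt the argument of Lemma~3.1 of \cite{XY}: push an infinite sequence of integer points in a direction where $F_6$ degenerates, so that the degree-$5$ part $F_5$ dominates with a negative sign. Since $F_6$ is positive semi-definite but, by Lemmas~\ref{posdef lem} and \ref{indef lem}, we may assume it is not positive definite, it has a nontrivial real zero. Because $F_6 \geq 0$ on $\bR^2$, any irreducible factor of $F_6$ in $\bQ[x,y]$ that has a real root must occur to an even power, hence to a power at least $2$. Fix such a factor $f \in \bZ[x,y]$; then $f^2 \mid F_6$, and since $\deg F_6 = 6$ we have $1 \leq \deg f \leq 3$. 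Write $F_6 = f^2 H$ with $\deg H = 6 - 2\deg f \leq 4$, and let $(\alpha : \beta) \in \bP^1(\bR)$ be a real root of $f$.

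The first step is to produce, in each case $\deg f \in \{1,2,3\}$, an infinite sequence $(x_n, y_n) \in \bZ^2$ with $R_n := \max\{|x_n|, |y_n|\} \to \infty$ along which $f(x_n,y_n)$ is bounded and $(x_n,y_n)/R_n$ converges to a fixed vector $\Bu$ proportional to $(\alpha,\beta)$. For $\deg f = 1$ the solutions of $f(x,y) = 1$ form a coset of $\bZ\cdot(\alpha,\beta)$, an arithmetic progression. For $\deg f = 2$ the form $f$ is indefinite with non-square discriminant, so its group of integral automorphs is infinite; iterating a fundamental automorph (squared, to make its larger eigenvalue positive) on a fixed solution of $f(x,y) = f(1,0)$ yields points escaping to infinity along the expanding eigendirection, which is a null line of $f$, while $f$ stays equal to $f(1,0)$. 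For $\deg f = 3$ we have $F_6 = c f^2$ with $c > 0$ and $\alpha/\beta$ irrational; Dirichlet's theorem gives infinitely many $(x_n,y_n)$ with $|x_n/y_n - \alpha/\beta| < y_n^{-2}$, hence $|f(x_n,y_n)| = O(R_n)$. In all three cases one is moreover free to replace $\Bu$ by $-\Bu$ (traverse the progression the other way, use the automorph orbit of the negated solution, or negate the approximants). Given such a sequence, $F_6(x_n,y_n) = f(x_n,y_n)^2 H(x_n,y_n) = O(R_n^4)$, and $F_j(x_n,y_n) = O(R_n^j) = o(R_n^5)$ for $1 \leq j \leq 4$, whereas homogeneity of degree $5$ gives $F_5(x_n,y_n) = R_n^5\bigl(F_5(\Bu) + o(1)\bigr)$. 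Since $\Bu$ is a common real zero of $F_6$ would force, via dehomogenization, a nontrivial common factor of $F_6$ and $F_5$ in $\bQ[x,y]$, the hypothesis $\gcd(F_6,F_5)=1$ yields $F_5(\Bu) \neq 0$; choosing the sign of $\Bu$ so that $F_5(\Bu) < 0$ (legitimate, $F_5$ having odd degree) gives $F(x_n,y_n) = F_5(x_n,y_n) + O(R_n^4) \to -\infty$, proving the lemma.

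The main obstacle is the construction in the first step, specifically the case $\deg f = 3$, which has no counterpart in the quartic setting of \cite{XY}: one must approximate an algebraic irrationality well enough to keep $f$ small while also controlling the limiting direction of the approximants, which is exactly what Dirichlet's theorem provides. The coprimality hypothesis $\gcd(F_6, F_5) = 1$ then enters decisively in guaranteeing that $F_5$ does not vanish at that limiting direction.
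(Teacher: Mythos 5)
Your proof is correct, and its core mechanism is the same as the paper's: find integer points running off to infinity along a real null direction $\Bu$ of $F_6$, so that $F_6$ and the terms of degree $\leq 4$ are $o(R^5)$ while $F_5 \asymp R^5$ with $F_5(\Bu)\neq 0$ (forced by $\gcd(F_6,F_5)=1$, which also rules out $F_5\equiv 0$), and then use the odd degree of $F_5$ to choose the sign. Where you diverge is in how the points are produced: the paper does not factor $F_6$ over $\bQ$ at all. It takes the real linear factor $\mathfrak{L}(x,y)=\sin(\xi)x-\cos(\xi)y$ at the real zero, notes that positive semi-definiteness forces $F_6=\mathfrak{L}^{2r}\mathfrak{F}_6$ with $r\geq 1$, and applies Dirichlet's approximation theorem directly to get infinitely many $(u,v)$ with $|\mathfrak{L}(u,v)|\ll (u^2+v^2)^{-1/2}$, whence $F_6(u,v)\ll (u^2+v^2)^{6-4r}\cdot(u^2+v^2)^{-(6-4r)/2+\cdots}$ --- in any case $O(u^2+v^2)$ --- uniformly, with no case distinction on whether the root is rational, quadratic, or cubic. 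Your three-case construction (arithmetic progressions for linear $f$, Pell automorphs for quadratic $f$, Dirichlet for cubic $f$) is valid but does extra work: keeping $f$ itself bounded is stronger than needed, since the even multiplicity $\mathfrak{L}^2\mid F_6$ already converts the Dirichlet bound $|\mathfrak{L}|\ll R^{-1}$ into $F_6=o(R^5)$. Two small points to tidy if you keep your route: $f(x,y)=1$ need not be soluble if $f$ is imprimitive (take $f$ equal to an attained value, e.g.\ its content), and in the cubic case $f(x_n,y_n)$ is only $O(R_n)$ rather than bounded, which still gives $F_6=O(R_n^2)$ and so does not affect the conclusion.
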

\begin{proof}
We begin by noting that the hypotheses of the statement ensure that $F_5$ is not the zero polynomial.
Let $(\cos (\xi), \sin(\xi))$ be a real zero of $F_6$, which exists by the assumption that $F_6$ is positive semi-definite but not definite. We then write
$$
\mathfrak{L}(x,y) = \sin (\xi) x - \cos (\xi) y
$$
and
$$
F_6(x, y) = \mathfrak{L}(x, y)^{2 r} \mathfrak{F}_{6} (x, y),
$$
where $r \in \mathbb{N}$ and $\mathfrak{L} \nmid \mathfrak{F}_{6}$. In particular, since $\gcd(F_6, F_{5}) = 1$, we have $\mathfrak{L} \nmid F_{5}$.
By Dirichlet's theorem on Diophantine approximation,
there exist infinitely many $(u, v) \in \bZ^2$ satisfying
$$
|\sin (\xi) u - \cos (\xi) v| \ll \frac{1}{\sqrt{ u^2 + v^2 }}.
$$
For these $(u, v)$, we have
$$
F_{6}(u, v) = \mathfrak{L}(u, v)^{2 r} \mathfrak{F}_{6} (u, v)  \ll ( u^2 + v^2 )^{- r} (u^2 + v^2 )^{2 -  r} \ll (u^2 + v^2)^{2 - 2 r} \ll 1,
$$
$$
| F_{5}(u, v) |  \gg  (u^2 + v^2)^{ \frac{5}{2} }
\quad
\textnormal{and}
\quad
|F_{4}(u, v) + \cdots + F_1(u, v)| \ll \left(u^2 + v^2 \right)^2.
$$
Therefore, we obtain
$$
F (u, v) = F_{5}(u, v) + O\left(u^4 + v^4 \right).
$$
By replacing $(u,v)$ with $(-u, - v)$ if necessary, it follows that
$$
F (u, v) < 0
\quad
\textnormal{and}
\quad
|F(u, v)| \gg (u^2 + v^2)^{ \frac{5}{2}},
$$
provided $\sqrt{u^2 + v^2}$ is sufficiently large, and the result follows.
\end{proof}	

Next we shall need:

\begin{lemma} \label{bdlem} Let $\delta > 0$, and for a subset $\fB \subseteq \bZ_{\geq 0}$ put 
\[\R_F^{(\delta)}(\fB) = \left\{n \in \fB : \exists (x,y) \in \bZ^2 \text{ s.t. } F(x,y) = n \text{ and } F(x,y) \gg_\delta \max\{|x|, |y|\}^{1 + \delta}\right\}. \]
Then there exists $\kappa = \kappa(\delta) > 0$ such that
\[\# \R_F^{(\delta)} ([N,2N)) = o_\delta \left(N^{1 - \kappa(\delta)} \right).\]
\end{lemma}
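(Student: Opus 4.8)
\textbf{Proof proposal for Lemma \ref{bdlem}.}

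The plan is to bound the number of integers $n \in [N,2N)$ that are hit by $F$ at points $(x,y)$ for which $F(x,y)$ grows at least like a power $\max\{|x|,|y|\}^{1+\delta}$. The key observation is that such points cannot be too numerous: if $F(x,y) = n \in [N,2N)$ and $n \gg_\delta \max\{|x|,|y|\}^{1+\delta}$, then $\max\{|x|,|y|\} \ll_\delta N^{1/(1+\delta)}$, so all relevant $(x,y)$ lie in a box of side $O_\delta(N^{1/(1+\delta)})$. A trivial count then gives $\#\R_F^{(\delta)}([N,2N)) \leq \#\{(x,y) : \max\{|x|,|y|\} \ll_\delta N^{1/(1+\delta)}\} \ll_\delta N^{2/(1+\delta)}$. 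This already suffices when $\delta > 1$, since then $2/(1+\delta) < 1$ and we may take $\kappa(\delta) = 1 - 2/(1+\delta) - \eta$ for small $\eta > 0$. The interesting range is therefore $0 < \delta \leq 1$, where the trivial box count is not immediately better than $N$.

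To handle small $\delta$, I would instead exploit that the points contributing to $\R_F^{(\delta)}([N,2N))$ lie near a level curve of $F$ of bounded degree. Writing $\mathrm{d} = \deg F \leq 6$, the real algebraic curve $F(x,y) = n$ is a curve of degree $\mathrm{d}$, and within the box $|x|,|y| \ll_\delta N^{1/(1+\delta)}$ the integer points on or near it are sparse. More precisely, one can partition the relevant box into $O_\delta(N^{2\theta/(1+\delta)})$ sub-boxes of side $N^{(1-\theta)/(1+\delta)}$ for a suitable $\theta \in (0,1)$; on each sub-box $F$ restricted to the lattice points is, up to lower order terms, governed by its top-degree behaviour, and a Bombieri--Pila / determinant-type estimate (or, more elementarily, the observation that on a short enough arc the map $(x,y)\mapsto F(x,y)$ is injective on lattice points once $F(x,y)\gg_\delta \max\{|x|,|y|\}^{1+\delta}$ forces the gradient to be large) bounds the number of distinct values of $F$ per sub-box. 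Summing over sub-boxes yields $\#\R_F^{(\delta)}([N,2N)) \ll_\delta N^{2/(1+\delta)} \cdot N^{-\rho}$ for some $\rho = \rho(\delta,\mathrm{d}) > 0$, and since $2/(1+\delta) < 2$ always and we only need to beat $N^1$, setting $\kappa(\delta) = \min\{1 - 2/(1+\delta) + \rho(\delta), \ \text{small positive}\}$ — adjusted so it is genuinely positive — gives the claim. Actually the cleanest route avoiding $\delta \le 1$ subtleties: since any such $n$ has at most $O_{\mathrm d}(1)$ preimages lying in a fixed short arc, and the curve $\{F = n\}\cap[\text{box}]$ decomposes into $O_{\mathrm d}(1)$ monotone arcs of length $\ll_\delta N^{1/(1+\delta)}$, one gets $\#\R^{(\delta)}_F([N,2N)) \ll_\delta N^{1/(1+\delta)}$ directly, which is $o_\delta(N^{1-\kappa})$ with $\kappa(\delta) = \delta/(2(1+\delta))$, say.

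The main obstacle I anticipate is making the ``few values per arc'' step rigorous when $F$ is not a binary form: the lower-order homogeneous components $F_5, \dots, F_1$ perturb the level curves, and near the loci where $F_6$ degenerates (recall in this section $F_6$ is only positive \emph{semi}-definite, so it vanishes along a real line) the gradient of $F$ can be small, which is precisely where the hypothesis $F(x,y)\gg_\delta \max\{|x|,|y|\}^{1+\delta}$ must be invoked to rule out too many near-simultaneous solutions. One has to show that the condition $F(x,y) \gg_\delta \max\{|x|,|y|\}^{1+\delta}$ forces $(x,y)$ away from the troublesome strip where $\mathfrak{L}(x,y)$ (in the notation of Lemma \ref{gcd}) is small, or else forces $n$ to be large relative to the arc length, in either case limiting multiplicity. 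Once that geometric input is in place, the counting is routine and $\kappa(\delta)$ can be extracted explicitly in terms of $\delta$ alone.
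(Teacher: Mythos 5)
Your opening step (the box count giving $\#\R_F^{(\delta)}([N,2N)) \ll_\delta N^{2/(1+\delta)}$, which disposes of $\delta>1$) is exactly the paper's first move, but in the essential range $0<\delta\le 1$ your argument has a genuine gap: the quantities you propose to control are the wrong ones. Everything you invoke there --- $O(1)$ preimages per short arc, the decomposition of $\{F=n\}$ into $O(1)$ monotone arcs of length $\ll_\delta N^{1/(1+\delta)}$, or a Bombieri--Pila count of lattice points on $\{F=n\}$ --- bounds the number of representations of a \emph{single} value $n$, i.e.\ the multiplicity. The lemma asks for an upper bound on the number of \emph{distinct values} attained, and multiplicity bounds point the wrong way: small multiplicity gives no upper bound on the number of values, so the jump to $\#\R_F^{(\delta)}([N,2N))\ll_\delta N^{1/(1+\delta)}$ in your ``cleanest route'' does not follow from what precedes it. The only bridge you have from point counts to value counts is ``values $\le$ admissible points,'' and that bridge cannot carry the load without structural input on $F$: for example if $F(x,y)=x^6$ (or any $F$ depending weakly on $y$), the set of $(x,y)$ with $N\le F(x,y)<2N$ and $F(x,y)\gg_\delta \max\{|x|,|y|\}^{1+\delta}$ has on the order of $N^{1/6+1/(1+\delta)}$ elements, which exceeds $N$ for small $\delta$, so no structure-free counting of points, arcs, or sub-boxes can yield $o(N^{1-\kappa})$ values. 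You correctly flag this as ``the main obstacle,'' but the proposal defers precisely that step rather than carrying it out, and the suggested mechanism (``limiting multiplicity'') is again the wrong direction.

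For comparison, the paper's proof supplies the missing structural input directly and quite cheaply: for $\delta\le 1$ it considers representations additionally satisfying $F(x,y)\ll \max\{|x|,|y|\}^2$, deduces that $F_6(x,y)\ll \max\{|x|,|y|\}^2$, and hence that $|\ell(x,y)|=O(1)$ for a real linear factor $\ell$ of $F_6$ (available in the setting where the lemma is applied, since there $F_6$ is positive semi-definite but not definite). Thus the admissible points are confined to a strip of bounded width: $y$ is determined up to $O(1)$ choices by $x$, while $\max\{|x|,|y|\}^{1+\delta}\ll N$ leaves only $O_\delta(N^{1/(1+\delta)})$ choices of $x$, giving the bound $O_\delta(N^{1/(1+\delta)})$ on the number of values. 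Note this is the opposite of the fix you sketch (pushing the points \emph{away} from the strip where the linear form is small): the paper confines them \emph{to} that strip and counts them there. To salvage your approach you would need to prove such a confinement statement, or otherwise convert an excess of admissible points into high multiplicity per value; as written, the proposal does not establish the lemma.
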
 

\begin{proof} Suppose that $F(x,y) = n$ with $N \leq n < 2N$ and $F(x,y) \gg_\delta \max\{|x|, |y|\}^{1 + \delta}$. It then follows that 
\[\max\{|x|, |y|\} \ll_\delta N^{\frac{1}{1+\delta}}.\]
It thus follows that 
\[\# \R_F^{(\delta)}([N, 2N)) \ll_\delta \#  \left \{(x,y) \in \bZ^2 : \max\{|x|, |y|\} \ll_\delta N^{\frac{1}{1+\delta}} \right\} = O \left(N^{\frac{2}{1+\delta}} \right).\]
If $\frac{2}{1 + \delta} < 1$, then we may take $\kappa = 2/(1+\delta)$. Thus we may assume that $\delta \leq 1$, and we consider the set 
\[\R_F^{(\delta, \ast)}([N,2N)) = \]
\[  \left\{n \in [N, 2N) : \exists (x,y) \in \bZ^2 \text{ s.t. } F(x,y) = n \text{ and } \max\{|x|, |y|\}^{1 + \delta} \ll_\delta F(x,y) \ll \max\{|x|, |y|\}^2\right\}.\]
The upper bound condition and the fact that $\deg F = 6$ implies that $F_6(x,y) \ll \max\{|x|, |y|\}^2$. In particular, for a real linear factor $\ell(x,y)$ of $F_6$ we must have $|\ell(x,y)| = O(1)$. Thus, $y$ is determined up to a constant factor given $x$. Then the condition 
\[\max\{|x|, |y|\}^{1 + \delta} \ll_\delta N \]
implies that there are $O_\delta \left(N^{1/(1+\delta)} \right)$ choices for $x$. It follows that 
\[\# \R_F^{(\delta, \ast)}([N, 2N)) = O_\delta \left(N^{\frac{1}{1 + \delta}} \right)\]
and the claim follows.

\end{proof}

Let $\ell$ be a real linear factor of $f$. Over $\ol{\bQ}$, we then write 
\begin{equation} \label{F6L} F_6(x,y) = \ell(x,y)^2 \F_6(x,y),\end{equation}
where $\gcd(\ell, \F_6) = 1$. By Lemma \ref{gcd} we may suppose that $\ell | F_5$. Hence, if $|\ell(x,y)| \gg 1$, then $F(x,y) \gg F_6(x,y) \gg \max\{|x|, |y|\}^4$, so we are done by Lemma \ref{bdlem}. Therefore we may assume that $|\ell(x,y)| = O(1)$. We record this as a lemma: 

\begin{lemma} \label{bigL} Suppose $F_6$ takes the shape (\ref{F6L}) for some real linear form $\ell$. Then the contribution from those $x,y \in \bZ$ satisfying $|\ell(x,y)| \gg 1$ is negligible. 
\end{lemma}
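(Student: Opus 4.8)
The plan is to exploit the mismatch between the (at most quadratic) size of $F$ permitted by the counting lemma and the sixth-power growth forced by $F_6$ when $\ell$ is bounded away from zero. First I would observe that since $F_6$ is positive semi-definite, it is in particular non-negative, so $F(x,y) \geq F_6(x,y) + O(\max\{|x|,|y|\}^5)$ gives no cancellation at top order: for $\max\{|x|,|y|\}$ large we have $F(x,y) \gg F_6(x,y)$. Next, using the factorization $F_6 = \ell^2 \F_6$ with $\gcd(\ell,\F_6)=1$, the hypothesis $|\ell(x,y)| \gg 1$ combined with the fact that $\F_6$ is itself positive semi-definite (being a quotient of positive semi-definite $F_6$ by the square of a real form) and has no common factor with $\ell$ shows $\F_6(x,y) \gg \max\{|x|,|y|\}^4$ along the relevant region, hence $F_6(x,y) = \ell(x,y)^2 \F_6(x,y) \gg \max\{|x|,|y|\}^4$. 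Actually one must be slightly careful: $\F_6$ could still have real linear factors and thus vanish along other directions, so I would argue instead that $\ell^2 \F_6$ restricted to the locus $|\ell| \gg 1$ is bounded below by $\max\{|x|,|y|\}^4$ because the remaining real zero directions of $\F_6$, where $\F_6$ is small, are precisely where $\ell$ is large, and there $\ell^2$ is of order $\max\{|x|,|y|\}^2$ while $\F_6 \gg$ (distance to its zero set)$^2 \cdot \max\{|x|,|y|\}^{\,2}$; combining these in all sub-cases yields $F_6(x,y) \gg \max\{|x|,|y|\}^4$ throughout.

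Granting $F(x,y) \gg \max\{|x|,|y|\}^4 \geq \max\{|x|,|y|\}^{1+\delta}$ for any fixed $\delta \leq 3$ (say $\delta = 3$), the values of $F$ arising from such $(x,y)$ all lie in the set $\R_F^{(\delta)}$ of Lemma \ref{bdlem}. That lemma furnishes $\kappa = \kappa(3) > 0$ with
\[
\# \R_F^{(3)}([N,2N)) = o\!\left(N^{1-\kappa}\right),
\]
so the number of integers in $[N,2N)$ represented by $F$ using arguments with $|\ell(x,y)| \gg 1$ is $o(N^{1-\kappa})$. Since Theorem \ref{MT2} and the non-completeness conclusion only require control of $\#\R_F$ up to $o(N)$ — indeed up to $o(N/\sqrt{\log N})$, which is coarser than a power saving — this contribution is negligible in the required sense, which is exactly the assertion of the lemma.

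The main obstacle I anticipate is the lower bound $F_6(x,y) \gg \max\{|x|,|y|\}^4$ on the region $|\ell(x,y)| \gg 1$: one cannot simply say $\F_6 \gg \max\{|x|,|y|\}^4$ unconditionally, because $\F_6$ may itself be only semi-definite and vanish in directions other than that of $\ell$. The fix is the standard distance-to-zero-locus estimate for real homogeneous forms, applied direction by direction, together with the crucial point that $\ell \nmid \F_6$ forces $\ell$ to be large precisely where $\F_6$ degenerates, so the product $\ell^2 \F_6$ never drops below order $\max\{|x|,|y|\}^4$ once $\ell$ is bounded below. Making this trichotomy (near a zero of $\ell$, near a zero of $\F_6$, away from both) precise is the only real content; everything else is a direct appeal to Lemma \ref{bdlem}.
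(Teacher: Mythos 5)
Your overall strategy is the same as the paper's: prove a pointwise lower bound for $F$ on the region $|\ell(x,y)| \gg 1$ and then feed it into Lemma \ref{bdlem}. But both of your key estimates have genuine gaps. First, the claim that $F(x,y) \gg F_6(x,y)$ for $\max\{|x|,|y|\}$ large does not follow from positive semi-definiteness of $F_6$: the difference $F - F_6$ contains $F_5$, which is $O(\max\{|x|,|y|\}^5)$ and can be negative and far larger in absolute value than $F_6$ precisely when $F_6 \ll \max\{|x|,|y|\}^5$, which is the regime at issue. The paper's reduction only makes this comparison after invoking Lemma \ref{gcd} (so that one may assume $\ell \mid F_5$, the alternative being failure of arithmetic positivity); your proposal never uses that hypothesis, and ``no cancellation at top order'' is not a substitute for it.

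Second, and more seriously, the asserted bound $F_6(x,y) \gg \max\{|x|,|y|\}^4$ ``throughout'' the region $|\ell(x,y)| \gg 1$ is false in exactly the central cases of Proposition \ref{MP1}. In (\ref{F6L}) the cofactor $\F_6$ is a positive semi-definite quartic coprime to $\ell$, and it typically has real zero lines of its own: $F_6 = a f^2$ with $f$ a totally real cubic gives $\F_6 = a\,\ell'^2\ell''^2$, and $F_6 = (x^2-ky^2)^2 g$ gives $\F_6 = (x+\sqrt{k}\,y)^2 g$. By Dirichlet approximation there are infinitely many integer points with $|\ell(x,y)| \asymp \max\{|x|,|y|\}$ but $|\ell'(x,y)| \ll \max\{|x|,|y|\}^{-1}$, and at such points your own estimate $\F_6 \gg \dist^2 \cdot \max\{|x|,|y|\}^{2}$ delivers only $F_6 \gg \max\{|x|,|y|\}^{2}$; indeed $F_6 \asymp \max\{|x|,|y|\}^{2}$ can occur, so the trichotomy cannot be ``combined'' to give the fourth-power bound. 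On that sub-region the application of Lemma \ref{bdlem} with $\delta = 3$ collapses, and since the lower-degree terms are $O(\max\{|x|,|y|\}^4)$ they can swamp $F_6$ there, so no bound $F \gg \max\{|x|,|y|\}^{1+\delta}$ is available from this argument alone. Points near the other real zero lines of $F_6$ require their own treatment (in effect, the same analysis the paper carries out near the line $\ell = 0$, applied to each real branch); asserting the uniform bound on all of $|\ell| \gg 1$ is the concrete step that fails.
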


We will require the following result, which appears as Lemma 5.1 in \cite{XY}: 

\begin{lemma} \label{quadpoly} Let $Q \in \bQ[x,y]$ be a quadratic polynomial. Let $Q_2$ be the degree $2$ homogeneous part of $Q$,
and suppose that $Q_2$ is a rational quadratic form with non-zero discriminant. Then there exist $q_1, q_2, q_3 \in \bQ$, depending only on $Q$, such that
\[ Q(x,y) = Q_2(x + q_1, y + q_2) + q_3.\]
\end{lemma}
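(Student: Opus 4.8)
The plan is to reduce $Q$ to its homogeneous quadratic part by completing the square, treating the linear terms as a gradient shift. Write $Q(x,y) = Q_2(x,y) + L(x,y) + c$ where $L$ is linear (homogeneous of degree $1$) and $c \in \bQ$ is constant. Since $Q_2$ is a quadratic form, we have the polarization identity $Q_2(x + q_1, y + q_2) = Q_2(x,y) + B\bigl((x,y),(q_1,q_2)\bigr) + Q_2(q_1,q_2)$, where $B$ is the symmetric bilinear form associated to $Q_2$; the middle term is linear in $(x,y)$ for fixed $(q_1,q_2)$. So the task is to choose $(q_1,q_2) \in \bQ^2$ so that $B\bigl((x,y),(q_1,q_2)\bigr) = L(x,y)$ identically, and then set $q_3 = c - Q_2(q_1,q_2)$.

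The key step is solvability of that linear system. Writing $Q_2(x,y) = \alpha x^2 + \beta xy + \gamma y^2$, the matrix of $B$ (in the standard basis) is $M = \begin{pmatrix} 2\alpha & \beta \\ \beta & 2\gamma \end{pmatrix}$, and the condition $B\bigl((x,y),(q_1,q_2)\bigr) = L(x,y)$ for all $(x,y)$ is exactly the matrix equation $M (q_1, q_2)^{\mathsf{T}} = (\lambda_1, \lambda_2)^{\mathsf{T}}$, where $L(x,y) = \lambda_1 x + \lambda_2 y$. Now $\det M = 4\alpha\gamma - \beta^2$, which is (up to the nonzero scalar $-1$) the discriminant of $Q_2$. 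By hypothesis this discriminant is nonzero, so $M$ is invertible over $\bQ$, and $(q_1, q_2) = M^{-1}(\lambda_1, \lambda_2)^{\mathsf{T}} \in \bQ^2$ is uniquely determined by $Q$. This gives the desired $q_1, q_2$, and then $q_3 := c - Q_2(q_1, q_2) \in \bQ$ completes the identity $Q(x,y) = Q_2(x + q_1, y + q_2) + q_3$.

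There is no real obstacle here: the whole content is the observation that "completing the square" in two variables amounts to inverting the Hessian matrix of $Q_2$, and nondegeneracy of the quadratic form is precisely what makes that matrix invertible. The only point requiring a small amount of care is the bookkeeping when the characteristic-$2$-flavored factor of $2$ on the diagonal of $M$ matters — but over $\bQ$ this is harmless, and one could equally absorb it by working with $\tfrac12 M$ throughout. The rationality of $q_1, q_2, q_3$ is automatic since $M^{-1}$ and all the coefficients involved lie in $\bQ$, and their dependence only on $Q$ (not on $x, y$) is clear from the construction.
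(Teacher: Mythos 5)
Your proof is correct: completing the square via the polarization identity and inverting the Hessian matrix $M$, whose determinant is nonzero precisely because $\operatorname{disc}(Q_2) \ne 0$, is exactly the standard argument, and the paper itself only cites this lemma (Lemma 5.1 of \cite{XY}) rather than reproving it, with the proof there being the same completion-of-the-square computation. No gaps; the bookkeeping with $q_3 = c - Q_2(q_1,q_2)$ is right.
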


Next we treat the separate cases when $F_6$ is divisible by the square of a cubic, quadratic, or linear form defined over $\bQ$. 

\subsection{When $F_6$ is divisible by $f^2$, $f$ irreducible cubic} In this case $F_6 = a f(x,y)^2$ for some rational number $a$. By Lemma \ref{gcd} we note that $f$ divides $F_5$. Put $F_5 = f(x,y) g_5(x,y)$. If $\gcd(f, F_4) = 1$ then we may find, using Dirichlet's approximation theorem, infinitely many integers $p,q$ with $\gcd(p,q) = 1, q \geq 1$ such that
\[F_6(p,q) = O(q^2), |F_5(p,q)| = O(q^3) \quad \text{and} \quad |F_4(p,q)| \gg q^4.\]
Hence $|F_6(x,y)| \gg |F_4(x,y)|$ for all $x,y \in \bZ$. It follows that either $F(x,y) \gg |F_4(x,y)| \gg \max\{|x|, |y|\}^4$ for all $x,y \in \bZ$, if $F_4$ is positive on the region $|f(x,y)| \ll \max\{|x|, |y|\}$, in which case we are done by Lemma \ref{bdlem}, or $F$ takes on arbitrarily large negative values and thus $F$ fails to be arithmetically positive. We may therefore assume that $f | F_4$ as well. It follows that we may write 
\[F(x,y) = a f(x,y)^2 + f(x,y) g_5(x,y) + f(x,y) g_4(x,y) + F_3(x,y) + F_2(x,y) + F_1(x,y).\]

Put $g(x,y) = g_5(x,y) + g_4(x,y)$ and complete the square to obtain
\begin{equation} F(x,y) = a(f(x,y) + g(x,y))^2 - a g(x,y)^2 + F_3(x,y) + F_2(x,y) + F_1(x,y).
\end{equation} 
By replacing $g$ with $g + m$ for some integer $m$ if necessary, we may assume that the curve $\C$ defined by the cubic polynomial 
\[f(x,y) + g(x,y) + m\]
has a rational point. Since $f$ is an unramified binary cubic form, the cubic curve $\C$ is non-singular. Therefore, applying a rational transformation we may transform $F$ to take the shape
\[\F(x,y) = a (y^2 - x^3 - Ax - B)^2 + \G(x,y),\]
with $\deg \G \leq 4$. This situation will then be handled by Lemma \ref{Rouse}.  

\subsection{When $F_6$ is divisible by $f^2$, $f$ irreducible quadratic} Note that we may assume $f^3 \nmid F_6$, since if $F_6 = a f(x,y)^3$ then $F_6$ is either positive definite or indefinite, and we would be done by Lemmas \ref{posdef lem} or \ref{indef lem}. \\

Thus we may assume $F_6(x,y) = f(x,y)^2 g(x,y)$ with $g$ a non-singular quadratic form satisfying $\gcd(f,g) = 1$. We may further suppose that $f$ is indefinite, as follows. Note that $g$ must be positive definite by Lemma \ref{indef lem}. If $f$ is also positive definite then $F_6$ is positive definite, and we are done by Lemma \ref{posdef lem}. By Lemma \ref{gcd}, it follows that $f | F_5$. Put $F_5(x,y) = f(x,y) h(x,y)$.\\

By applying a rational transformation, we may assume $f(x,y) = x^2 - ky^2$ with $k$ positive square-free integer. Putting $x = \sqrt{k} y + c$, with $c = O(1)$, then gives
\[F_6(x,y) = c^2 (2 \sqrt{k} y + c)^2 (y^2 g(\sqrt{k}, 1) + cy g^\prime(\sqrt{k},1) + O(1)),\]
\[F_5(x,y) = c (2 \sqrt{k} y + c) (y^3 h(\sqrt{k}, 1) + c y^2 h^\prime(\sqrt{k},1)), F_4(x,y) = y^4 F_4(\sqrt{k}, 1) + cy^3 F_4^\prime(\sqrt{k},1) + O(y^2),\]
\[F_3(x,y) = y^3 F_3(\sqrt{k}, 1) + O(y^2).\]
Grouping the terms together, we find that
\begin{align*} & F(x,y)  = y^4(4kc^2 g(\sqrt{k},1) + 2\sqrt{k} c h(\sqrt{k},1) + F_4(\sqrt{k},1)) \\
& + y^3 (c^3 (4k g^\prime(\sqrt{k}, 1) + 4 \sqrt{k} g(\sqrt{k},1) + c^2 (h(\sqrt{k},1) + 2 \sqrt{k} h^\prime(\sqrt{k},1) + c F_4^\prime(\sqrt{k},1) + F_3(\sqrt{k},1))  + O(y^2). 
\end{align*}
Put
\[v_k(z) = 4kg(\sqrt{k},1) z^2 + 2\sqrt{k} h(\sqrt{k},1) z + F_4(\sqrt{k}, 1).\]
If $v_k(z)$ is not a perfect square over $\bR$, then $F$ fails to be arithmetically complete. Hence we must have $v_k(z) = A_k (z - \beta)^2$ for $\beta \in \bR \cap \ol{\bQ}$. If the cubic polynomial
\[w_k(z) = (4 k g^\prime(\sqrt{k}, 1) + 4 \sqrt{k} g(\sqrt{k}, 1)) z^3 + (h (\sqrt{k}, 1) + 2 \sqrt{k} h^\prime(\sqrt{k},1)) z^2 + F_4^\prime(\sqrt{k}, 1) z + F_3(\sqrt{k}, 1)\]
does not have a root at $\beta$, then by applying Dirichlet's approximation theorem we see that there exist infinitely many $x,y \in \bZ$ satisfying $|F(x,y)| \asymp \max\{|x|, |y|\}^3$, and by indefiniteness, this shows that $\inf_{(x,y) \in \bZ^2} F(x,y) = -\infty$. We may therefore assume that $w_k(\beta) = 0$. We may then complete the square to write
\[F(x,y) = y^2 \left((y u_1(c) +  u_2(c))^2 + u_0(c) \right) + O(y),\]
where $u_1, u_2$ are linear and quadratic polynomials in $c$ and $u_0$ is at most a quartic polynomial in $c$. By construction, $u_1$ vanishes at $\beta$. We now use the observation that since $v_k(z)$ is defined over $\bQ(\sqrt{k})$ and it is a square, its root must lie in $\bQ(\sqrt{k})$. In particular, there exists a positive number $c_\beta$ such that 
\[|x - \sqrt{k} y - \beta| > \frac{c_\beta}{|y|}\]
for all $x,y \in \bZ$. This then implies that
\[|yu_1(c) + u_2(c)| \gg c_\beta^\prime > 0\]
for all $x,y \in \bZ$, and hence $F(x,y) \gg y^2$ for all $x,y \in \bZ$. We are then done by Lemma \ref{bdlem}.

\section{Proof of Proposition \ref{MP2}}

Next we address the problem when $F_6$ is divisible by a perfect 4-th power, but not a fifth. Then the divisor must in fact be defined over $\bQ$, and up to a $\GL_2(\bZ)$-change of variables, we may assume that the divisor is $x^4$. That is, we assume that
\[F_6(x,y) = x^4 f_6(x,y), \deg f_6 = 2.\]
Suppose that $x^2 \nmid F_5$. Now suppose that $|x| \asymp T^{\theta}, |y| \asymp T$ for $1/2 < \theta < 2/3$ we find that 
\[F_6(x,y) = O\left(T^{4 \theta +2}\right), \quad |F_5(x,y)| \gg T^{\theta + 4}, \quad \text{and} \quad |F_4(x,y)| = O \left(T^4 \right).\]
Then we see that 
\[|F(x,y)| \gg |F_5(x,y)| \gg T^{\theta + 4}.\]
Since $F_5$ is guaranteed to be indefinite, we see that $F$ is not arithmetically complete. \\

Thus we may suppose that $x^2 | F_5(x,y)$. This enables us to write 
\[F(x,y) = y^2(a_2 x^4 + a_1 x^2 y + a_0 y^2) + xy (b_2 x^4 + b_1 x^2 y + b_0 y^2) + x^2 (c_2 x^4 + c_1 x^2 y + c_0 y^2) + G_5(x,y).\]
When $|x| \ll |y|$, the first term is dominant unless 
\[|a_2 x^4 + a_1 x^2 y + a_0 y^2| = O (|xy^{-1}|) = O(1).\]

Further, this polynomial must be a perfect square, for similar reasons as discussed before. This then corresponds to a factorization of the form 
\[a_2 x^4 + a_1 x^2 y + a_0 y^2 = (\alpha_1 x^2 - \alpha_2 y)^2.\]
If $\alpha_2 = 0$ then we must have $|x| = O(1)$. For each of these values of $x$, say $\{x_1, \cdots, x_k\}$, we may consider the single-variable polynomials
\[F_j(y) = F(x_j, y)\]
and conclude from Lemma \ref{single} that $F$ is dearth. Thus we may assume that $\alpha_2 \ne 0$. Further, by replacing $y$ with $-y$ if necessary we may assume that $\alpha_1, \alpha_2$ are both positive. Then the polynomial 
\[g_3(x,y) = b_2 x^4 + b_1 x^2 y + b_0 y^2\]
must also vanish at the weighted linear factor $\alpha_1 x^2 - \alpha_2 y$, and this corresponds to the polynomial $b_2 x^4 + b_1 x^2 y + b_0 y^2$ being divisible by $\alpha_1 x^2 - \alpha_2 y $. This enables us to once again complete the square and write 
\begin{align*} F(x,y) & = A \left(y(\alpha_1 x^2 - \alpha_2 y) + x(\beta_1 x^2 - \beta_2 y)\right)^2 + x^2 (h_2 x^4 + h_1 x^2 y + h_0 y^2) + G_5(x,y) \\
& = A(x,y)^2 + B(x,y) + G_5(x,y),\end{align*}
say. We then write 
\[y = \alpha_1 \alpha_2^{-1} x^2 + s.\]
If $|s| \gg x^2$, then $F(x,y) \gg y^4 = \max\{|x|, |y|\}^4$, and we are done by Lemma \ref{bdlem}. Otherwise we must have $s = o(x^2)$. Then we have
\begin{align*} y (\alpha_1 x^2 - \alpha_2 y) + x (\beta_1 x^2 - \beta_2y) & = (\alpha_1 \alpha_2^{-1} x^2 + s)s + x \left(\frac{\beta_1 \alpha_2 - \beta_2 \alpha_1}{\alpha_2} x^2 - \beta_2 s \right) \\
& = x^2 \left(\frac{\alpha_1}{\alpha_2} s - \frac{\beta_2 \alpha_1 - \beta_1 \alpha_2}{\alpha_2} x \right) + s^2 - \beta_2 sx.
\end{align*} 
Now choosing 
\[s = \frac{\beta_2 \alpha_1 - \beta_1 \alpha_2}{\alpha_1}x \]
we see that it is possible to make 
\[\left(y(\alpha_1 x^2 - \alpha_2 y) + x(\beta_1 x^2 - \beta_2 y) \right)^2 = O(x^2).\]
Note that $B$, having no common component with $A$, must have constant sign on the region $y \sim \alpha_1 \alpha_2^{-1} x^2$. If it has positive sign then we see that $F(x,y) \gg x^6 \gg \max\{|x|, |y|\}^3$ on this region and we are done by Lemma \ref{bdlem}. If it has negative sign, then using the above construction we see that we can make $A(x,y)^2 = O(x^4)$ while $|B(x,y)| \gg x^6$, and so $F$ is negative and has order of magnitude $x^6$ infinitely often, and we see that $F$ fails to be arithmetically positive. \\

Therefore it must be the case that $B \equiv 0$. Further, the analysis above holds whenever $G_5$ has any terms exceeding $x^4$ in order of magnitude in the regime when $|\alpha_1 x^2 - \alpha_2 y|^2 = O(|xy^{-1}|)$ . Thus we may assume $G_5$ takes the shape
\[G_5(x,y) = (d_2 x^4 + d_1 x^2 y + d_0y^2) + h_1 x^3 + h_0 xy + e_2 x^2 + e_1 y + e_0 x.\] 
Now, using the substitution 
\[y = \frac{\alpha_1}{\alpha_2} x^2 + \frac{\beta_2 \alpha_1 - \beta_1 \alpha_2}{\alpha_1} x + t\]
we find that 
\[F(x,y) =  D_3 x^4 + D_2 x^2 t + D_1 xt + D_0 t^2 + E_3 x^3 + E_2 x^2 + E_1 x. \]
Therefore, upon a polynomial change of variables, $F$ becomes a function in degree $4$ in two variables, so we are done by the main theorem in \cite{XY}. 

\section{Proof of Proposition \ref{MP3}} 

Lastly, we deal with the case when $F_6(x,y) = f_6 x^6$, so that it is totally ramified. Similar to Lemma to the arguments in the previous section we conclude:

\begin{lemma} \label{x6x5} Suppose that $F_6(x,y) = f_6 x^6$ and $x^3 \nmid F_5$. Then $F$ is not arithmetically positive. 
\end{lemma}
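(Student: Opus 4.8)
The plan is to mimic the opening of the proof of Proposition~\ref{MP2}: we will exhibit a sequence $(x,y)\in\bZ^2$ along which the degree-$5$ part $F_5$ dominates every other homogeneous component of $F$ and carries a fixed negative sign, so that $F(x,y)\to-\infty$ and hence $F$ fails to be arithmetically positive. Write $F_5(x,y)=\sum_{i=0}^{5}c_i x^i y^{5-i}$. The hypothesis $x^3\nmid F_5$ says precisely that $c_{i_0}\ne 0$ for some $i_0\in\{0,1,2\}$; take $i_0$ to be the least such index. (If $f_6<0$ then already $F(x,0)\to-\infty$, so we may as well assume $f_6>0$, though this is not essential.) We fix a real exponent $\theta=\theta(F)\in(0,1)$, to be pinned down in a moment, and work in the region $|x|\asymp T^{\theta}$, $|y|\asymp T$ as $T\to\infty$ through integers. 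Since $\theta<1$, for $i>i_0$ we have $|x^{i}y^{5-i}|\asymp T^{i\theta+5-i}$ with $i\theta+5-i$ strictly decreasing in $i$, so the monomial with $i=i_0$ dominates all the others and $|F_5(x,y)|\asymp T^{\,i_0\theta+5-i_0}$ on this region.

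It then remains to choose $\theta$ so that $T^{\,i_0\theta+5-i_0}$ also outgrows $F_6(x,y)=f_6x^6\asymp T^{6\theta}$ and the lower-order pieces $F_4(x,y)+\cdots+F_1(x,y)=O(T^4)$ (the latter bound being valid because $|y|\asymp T$ is the larger coordinate). Beating $F_6$ requires $i_0\theta+5-i_0>6\theta$, i.e.\ $\theta<\tfrac{5-i_0}{6-i_0}$; beating the lower pieces requires $i_0\theta+5-i_0>4$, i.e.\ $i_0\theta>i_0-1$, which holds automatically once $\theta>0$ when $i_0\le 1$ and reads $\theta>\tfrac12$ when $i_0=2$. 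Thus one may take $\theta\in(0,\tfrac56)$ when $i_0=0$, $\theta\in(0,\tfrac45)$ when $i_0=1$, and $\theta\in(\tfrac12,\tfrac34)$ when $i_0=2$; each interval is non-empty, so fix such a $\theta$. For every large $T$ we then choose integers $x,y$ with $|x|\asymp T^{\theta}$, $|y|\asymp T$, and with the signs of $x$ and $y$ arranged so that $c_{i_0}x^{i_0}y^{5-i_0}<0$ --- this is a condition on $\sgn(y)$ alone when $i_0$ is even and on $\sgn(x)$ alone when $i_0=1$, so it is always achievable. For these points $F(x,y)=c_{i_0}x^{i_0}y^{5-i_0}\bigl(1+o(1)\bigr)\to-\infty$, whence $\inf_{(x,y)\in\bZ^2}F(x,y)=-\infty$ and $F$ is not arithmetically positive.

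I expect no serious obstacle here; the content is essentially the bookkeeping of the three exponent windows together with the verification that the chosen monomial of $F_5$ dominates the remaining monomials of $F_5$ as well as $F_6$ and $F_4$. The tightest case is $i_0=2$, where $\theta$ is squeezed into $(\tfrac12,\tfrac34)$: here $\theta>\tfrac12$ is needed to overcome a possible $y^4$-term coming from $F_4$ and $\theta<\tfrac34$ to overcome $f_6x^6$, and one should record explicitly that the monomials $c_3x^3y^2$, $c_4x^4y$, $c_5x^5$ of $F_5$ are negligible on this range (they are, since $\theta<1$). In the case $i_0=0$ one could alternatively conclude at once: for fixed $x_0$ the one-variable polynomial $F(x_0,y)$ has odd degree $5$ in $y$ and is therefore unbounded below --- but this shortcut genuinely fails for $i_0\in\{1,2\}$, since there $F(x_0,\cdot)$ can have degree $\le 4$, which is why the two-variable scaling is required.
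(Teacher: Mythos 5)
Your proof is correct and is essentially the paper's argument: the paper likewise works in the anisotropic region $|x|\asymp T^{\theta}$, $|y|\asymp T$ (taking $\theta=\tfrac12$ when $x^2\nmid F_5$ and $\theta=\tfrac23$ when $x^2\,\|\,F_5$) so that the lowest-$x$-power monomial of $F_5$ dominates $F_6$ and the lower-degree terms, and then invokes the indefiniteness of $F_5$ to force $F\to-\infty$. Your write-up is in fact a bit more explicit than the paper's (it spells out the $i_0=0$ case, the admissible $\theta$-windows, and the sign choice), but it is the same method.
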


\begin{proof} Choose $|x| \asymp T^\theta, |y| \asymp T$. Then the hypotheses of the lemma imply that 
\[F_6(x,y) \asymp T^{6 \theta}, |F_5(x,y)| \asymp T^{\theta + 4}, |F_4 + \cdots + F_1| = O \left(T^4 \right).\]
Choosing $\theta = 1/2$ shows that 
\[F(x,y) \asymp F_5(x,y).\]
Since $F_5$ is necessarily indefinite, we see that $F$ cannot be arithmetically positive. If $x^3 \nmid F_5$ then
\[F_6(x,y) \asymp T^{6 \theta}, |F_5(x,y)| \asymp T^{2 \theta + 3}, |F_4 + \cdots + F_1| = O(T^4).\]
Choosing $\theta = 2/3$ we see that 
\[|F(x,y)| \asymp |F_5(x,y)|,\]
and thus $F$ again fails to be arithmetically complete. This completes the proof. \end{proof}

Next we consider the case when $x^4 | F_5$.

\subsection{The case when $x^4$ divides $F_5$}

We show that if $x^4 | F_5$ and $x | F_4$ exactly, then $F$ is not arithmetically complete.

\begin{lemma} \label{degen} Suppose $F$ is such that $F_6(x,y) = a_6 x^6$, $x^4 | F_5$, and $x | F_4$ exactly. Then $F$ is not arithmetically complete.  
\end{lemma}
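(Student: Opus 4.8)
The plan is to follow the same template used in the proof of Proposition \ref{MP2}: reduce $F$, via an explicit polynomial change of variables, either to a polynomial that is visibly not arithmetically positive, or to one that is dearth (so Lemma \ref{bdlem}/Lemma \ref{single} applies), or to a polynomial of degree $4$ in two variables, whence we invoke the main theorem of \cite{XY}. First I would record the structure forced by the hypotheses. Since $F_6 = a_6 x^6$ and $x^4 \mid F_5$, we may write $F_5(x,y) = x^4(p_1 x + p_0 y)$, and since $x \mid F_4$ exactly we write $F_4(x,y) = x(q_3 x^3 + q_2 x^2 y + q_1 x y^2 + q_0 y^3)$ with $q_0 \ne 0$. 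In the regime $|x| \asymp T^\theta$, $|y| \asymp T$, one has $F_6 \asymp T^{6\theta}$, $|F_5| \ll T^{4\theta+1}$, and $|F_4| \gg T^{\theta+3}$ (using $q_0 \ne 0$), with the lower-order part $O(T^4)$. For $\theta$ slightly below $2/3$ the $F_4$ term dominates everything: $|F(x,y)| \asymp |F_4(x,y)| \asymp \max\{|x|,|y|\}^4$ along a genuinely two-dimensional family of $(x,y)$.

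From here the dichotomy is the familiar one. If the binary cubic form $c(x,y) = q_3 x^3 + q_2 x^2 y + q_1 x y^2 + q_0 y^3$ (equivalently, the cubic part of $F_4/x$) takes a negative value at some real point $(\cos\phi,\sin\phi)$ with $\sin\phi \ne 0$, then by Dirichlet approximation we can choose infinitely many integer $(x,y)$ with $x$ of the appropriate size and $x c(x,y)$ large and negative, dominating $F$; this shows $\inf F = -\infty$ and $F$ fails to be arithmetically positive. Hence we may assume $c(x,y) \geq 0$ on the relevant range, which, since $c$ is an odd-degree form, forces $c$ to be divisible by the square of a linear form, say $c(x,y) = \ell(x,y)^2 m(x,y)$ over $\ol{\bQ}$ with $\ell,m$ linear; after a $\GL_2$ change of variables we may take $\ell(x,y) = x$ or $\ell(x,y) = y - \lambda x$. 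The case $\ell = x$ is inconsistent with "$x \mid F_4$ exactly" unless $m$ is a scalar multiple of $x$ too, contradicting $q_0 \ne 0$; so $\ell = y - \lambda x$ with $\lambda \in \bR \cap \ol{\bQ}$, actually $\lambda \in \bQ$ since $\ell$ is a repeated factor of a rational form. Thus on the region where $|y - \lambda x| \gg \max\{|x|,|y|\}^{1/2}$ we already get $F(x,y) \gg \max\{|x|,|y|\}^{4}$ (using that $xm(x,y) \gg \max\{|x|,|y|\}^3$ there when $m$ does not vanish, and treating the case $m$ proportional to $\ell$ separately — then $F_4$ is divisible by $x\ell^3$ and we get an even cleaner bound) and Lemma \ref{bdlem} finishes; so we may restrict to $|y - \lambda x| = o(\max\{|x|,|y|\}^{1/2})$, i.e. $y = \lambda x + t$ with $t$ small.

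The heart of the argument is then the substitution $y = \lambda x + t$. After this substitution $F$ becomes a polynomial in $x$ and $t$ in which, term by term, the total $x$-degree drops: $F_6$ contributes $a_6 x^6$; $F_5$ contributes $x^4(p_1 x + p_0(\lambda x + t)) = (\text{quintic in }x) + p_0 x^4 t$; $F_4$ contributes $x\,c(x,\lambda x + t) = x(\ell(x,\lambda x+t))^2 m(x, \lambda x + t) = x t^2 m(x,\lambda x+t)$, which is $O(x^2 t^2 + x t^3)$; and the lower-order parts $F_3 + F_2 + F_1$ contribute $O(x^3)$. One must now check, exactly as in Proposition \ref{MP2}, whether the resulting $x^6$, $x^5$, and $x^4$ coefficients (which are polynomials in $t$ of low degree) permit a genuine square. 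If the leading behaviour in $x$ is not a perfect square as a polynomial identity — e.g. the coefficient of $x^6$ together with the $x^5 t$ and $x^4 t^2$ terms does not organize into $(\text{something})^2$ — then we extract arbitrarily large negative values of $F$ along a suitable approximation family and conclude $F$ is not arithmetically positive. Otherwise we complete the square: $F(x,y) = A(x,t)^2 + (\text{lower order in }x)$ where $A$ is a polynomial whose leading part is $\approx \sqrt{a_6}\, x^3 + \cdots$, and we repeat the completion-of-square/comparison-of-orders step one more level down (as was done twice in Proposition \ref{MP2}, once for the $x^6$ level and once for the $x^4$ level). After finitely many such steps, all the "large in $x$" structure of $F$ has been absorbed into an exact square, and what remains is a polynomial of degree $\leq 4$ in the variables $(x,t)$; invoking the main theorem of \cite{XY} then shows that polynomial — and hence $F$ — is not arithmetically complete.

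The main obstacle I anticipate is the bookkeeping in this last step: one needs to verify that after the substitution $y = \lambda x + t$ and one or two rounds of completing the square, the cross-terms between the square part $A(x,t)^2$ and the residual terms, together with the genuinely lower-order pieces coming from $F_3, F_2, F_1$, really do collapse to total degree $\leq 4$, with no hidden term of $x$-degree $5$ or $6$ surviving. The cleanest way to organize this is probably to introduce a weight in which $x$ has weight $1$ and $t$ has weight $2$ (reflecting $y = \lambda x + t$ with $\lambda \in \bQ$ and the fact that the "small" regime is $t = o(x^2)$), track the weighted-leading parts at each stage, and observe that the obstruction to arithmetic positivity at each weighted level is exactly "the weighted-leading form is not a perfect square," while success at every level leaves behind a polynomial of bounded weighted degree, which translates — after a final weight-$1$ rescaling $t \mapsto$ (new variable) — to an honest degree-$4$ polynomial to which \cite{XY} applies. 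The delicate point is ensuring that the reduction to degree $4$ is valid as a polynomial identity (so that the full strength of \cite{XY}, which is about all integer values, can be used), rather than merely an asymptotic statement along one family; this is where the explicitness of the "$x \mid F_4$ exactly" hypothesis and the forced rationality of $\lambda$ are essential.
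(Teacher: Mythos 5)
Your opening size comparison is the right germ of the paper's argument, but as written it has a quantitative error that breaks it, and the elaborate fallback you build on top of it is both unnecessary and not a proof. With $|x|\asymp T^{\theta}$, $|y|\asymp T$ and $\theta$ slightly below $2/3$, you have $F_6=a_6x^6\asymp T^{6\theta}$ with $6\theta$ close to $4$, while $|F_4|\asymp T^{\theta+3}$ with $\theta+3$ close to $11/3$; so in your chosen regime it is $F_6$, not $F_4$, that dominates, and the claim $|F|\asymp|F_4|\asymp\max\{|x|,|y|\}^4$ is false (moreover $|F_4|\asymp T^{\theta+3}$ is never $T^4$ for $\theta<1$, and $F_3+F_2+F_1=O(T^3)$, not $O(T^4)$). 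What is needed is $\theta+3>\max\{6\theta,\,4\theta+1,\,3\}$, i.e. any $0<\theta<3/5$; the paper simply takes $\theta=1/6$. Once the exponent is fixed, the proof is already over and your sign dichotomy on $c(x,y)=F_4(x,y)/x$ is superfluous: since $q_0\ne 0$, in the regime $|x|\asymp T^{\theta}$, $|y|\asymp T$ one has $F_4(x,y)=q_0xy^3\left(1+O(T^{\theta-1})\right)$, whose sign is flipped simply by flipping the sign of $x$; hence $F\asymp F_4$ takes arbitrarily large negative values, $F$ is not arithmetically positive, and the lemma follows. This is exactly the paper's (two-line) proof.

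The second branch of your dichotomy is therefore vacuous --- $c(0,y)=q_0y^3$ already changes sign on the relevant range --- and, independently of that, it would not stand as written. Approximating a direction $(\cos\phi,\sin\phi)$ with $\cos\phi\ne 0$ by Dirichlet puts you at $|x|\asymp|y|$, where $F_6\asymp T^6$ swamps $F_4\asymp T^4$, so only the vertical direction is usable (and there no approximation argument is needed). More seriously, the proposed reduction after $y=\lambda x+t$ is unsubstantiated: the monomial $a_6x^6$ survives any linear substitution, so the assertion that after "finitely many" completions of squares "what remains is a polynomial of degree $\le 4$ in $(x,t)$" to which the main theorem of \cite{XY} applies is not a polynomial identity you have established, and you yourself flag the bookkeeping as an unresolved obstacle. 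Nothing of this machinery (which mimics the genuinely harder cases treated in Propositions \ref{MP2} and \ref{MP3}) is needed here: the hypothesis that $x$ divides $F_4$ exactly forces the dominant term $q_0xy^3$ in the thin regime, and indefiniteness is automatic.
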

\begin{proof} Choose $|x| \asymp T^\theta$ and $|y| \asymp T$. We note that the hypothesis implies that $F_4$ is indefinite. Note that
\[F_6(x,y) \asymp T^{6 \theta}, |F_5(x,y)| = O \left(T^{4 \theta + 1}\right), |F_4(x,y)| \asymp T^{\theta + 3}, |F_3 + \cdots + F_1| = O(T^3).\]
Choosing $\theta = 1/6$ we find that 
\[F_6(x,y) \asymp T, |F_5(x,y)| = O \left(T^{\frac{5}{3}} \right), |F_4(x,y)| \gg T^{\frac{19}{6}}, |F_3 + \cdots + F_1| = O \left(T^3 \right),\]
so $|F(x,y)| \asymp |F_4(x,y)|$ and thus cannot be arithmetically complete. 
\end{proof}

Therefore, we may assume that $x^2 | F_4$. Then $F$ takes the form
\begin{equation} \label{F40} F(x,y) = u_3 x^6 + u_2 x^4 y + u_1 x^2 y^2 + u_0 y^3 + xy L_1(x^2, y) + x^2 L_2(x^2, y) + yL_3(x^2, y) + x L_4(x^2, y) + L_5(x^2,y) + vx
\end{equation}
where $L_1, \cdots, L_5$ are linear forms and $u_3, u_2, u_1, u_0, v \in \bZ$. We study the lead form 
\[\F(x,y) =  u_3 x^6 + u_2 x^4 y + u_1 x^2 y^2 + u_0 y^3.\]
Note that this is a weighted homogeneous polynomial and a binary cubic form in the variables $x^2, y$, say 
\[\F(x,y) = \G(x^2, y) \quad \text{with} \quad \G(m,n) = u_3 m^3 + u_2 m^2 n + u_1 mn^2 + u_0 n^3.\] 
In particular, $\F$ is most certainly indefinite. Thus there exist integers $c_1, c_2$ such that $\G(c_1^2, c_2) < 0$. By weighted homogeneity we have
\[\F((Nx)^2, N^2 y) = N^6 \F(x^2, y). \]
Moreover, each of the terms in (\ref{F40}) is weighted homogeneous. For $N$ sufficiently large we have 
\[\left \lvert xy L_1(x^2, y) + x^2 L_2(x^2, y) + yL_3(x^2, y) + x L_4(x^2, y) + L_5(x^2,y) + vx \right \rvert = O \left(N^5 \right).\] 
This shows that 
\[|F(N^2 x^2, N^2 y)| \asymp N^6 |F(c_1^2, c_2)|\]
for $N$ sufficiently large, and in particular 
\[\inf_{(x,y) \in \bZ^2} F(x,y) = -\infty.\]
This shows that $F$ cannot be arithmetically positive. \\

Therefore, we may assume that $x^3$ divides $F_5$ exactly. 

\subsection{When $x^3$ divides $F_5$ exactly} 

We may therefore assume that $x^3 | F_5$. This enables us to write $F$ in the form 

\begin{equation} \label{calF0}F(x,y) = a_2 x^6 + a_1 x^3 y^2 + a_0 y^4 + xy L_1(x^3, y^2) + x^2 L_2(x^3, y^2) + yL_3(x^3, y^2)  + x L_4(x^3, y^2) + G(x,y),\end{equation} 
where each of the $L_i$'s is a linear form with integer coefficients. Note that the weighted homogeneous polynomial
\begin{equation} \label{calF} \F(x,y) = a_2 x^6 + a_1 x^3 y^2 + a_0 y^4\end{equation}
is a binary form in $x^3$ and $y^2$, and thus splits over $\bC$. Similar to the proof of Proposition \ref{MP2}, we conclude the following:

\begin{lemma} \label{leadram2} If the polynomial $\F$ is not a perfect square over $\bR$, then $F$ is not arithmetically positive. 
\end{lemma}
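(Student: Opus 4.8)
The plan is to reduce everything to the anisotropic weighting in which $x$ carries weight $2$ and $y$ weight $3$, so that the form $\F$ of (\ref{calF}) is weighted-homogeneous of degree $12$ while every remaining term in (\ref{calF0}) has weighted degree at most $11$. Writing $m = x^3$ and $n = y^2$, the form $\F$ becomes the binary quadratic form $\G(m,n) = a_2 m^2 + a_1 mn + a_0 n^2$, and, because $a_2 = f_6 \neq 0$, the statement that $\F$ is a perfect square over $\bR$ means exactly that $\G = (\sqrt{a_2}\,m + \beta n)^2$, i.e. $a_2 > 0$ together with vanishing discriminant $a_1^2 - 4a_0 a_2 = 0$. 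Thus the hypothesis that $\F$ is \emph{not} a perfect square splits into two mutually exclusive cases: (i) $\G$ fails to be positive semidefinite (namely $a_2 < 0$, or $a_2 > 0$ with $a_1^2 - 4a_0 a_2 > 0$), so that $\F$ attains a negative value; and (ii) $\G$ is positive definite ($a_2 > 0$, $a_1^2 - 4a_0 a_2 < 0$), so that $\F > 0$ off the origin. I would treat these separately, since only the first yields the literal conclusion that $F$ is not arithmetically positive.

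In case (i) the key step is a density observation: every rational number is of the form $x^3/y^2$. Indeed, for $\tau = \pm p/q$ in lowest terms one may take $x = \pm pq$ and $y = pq^2$, giving $x^3/y^2 = \pm p/q$, so $\{x^3/y^2 : (x,y) \in \bZ^2,\ y \neq 0\} \supseteq \bQ$ is dense in $\bR$. Setting $\phi(\tau) = a_2 \tau^2 + a_1 \tau + a_0$, the failure of positive semidefiniteness means $\phi$ is negative on some nonempty open interval $I$. Choosing a rational $\tau_0 = x_0^3/y_0^2 \in I$ gives $\F(x_0,y_0) = y_0^4\,\phi(\tau_0) < 0$. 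Finally I would apply the scaling $(x,y) \mapsto (t^2 x_0,\, t^3 y_0)$ with $t \in \bZ$: weighted-homogeneity gives $F(t^2 x_0, t^3 y_0) = t^{12}\F(x_0,y_0) + O(t^{11}) \to -\infty$, so $\inf_{(x,y) \in \bZ^2} F = -\infty$ and $F$ is not arithmetically positive, exactly as stated.

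In case (ii) — the positive-definite branch that the statement must also cover — $F$ is in fact bounded below, so one cannot hope to prove non-positivity; instead the argument runs through sparsity. Positive definiteness gives $\F(x,y) = \G(x^3,y^2) \gg x^6 + y^4$. Measuring size by the weighted quantity $\rho = \max\{|x|^{1/2}, |y|^{1/3}\}$, one has $\F \asymp \rho^{12}$, while each monomial of weighted degree at most $11$ in (\ref{calF0}) is $O(\rho^{11})$; hence $F = \F\,(1 + O(\rho^{-1})) \gg \rho^{12} \gg \max\{|x|,|y|\}^4$ once $\rho$ is large. Applying Lemma \ref{bdlem} with $\delta = 3$ then yields $\#\R_F([N,2N)) = o(N)$, so $F$ is dearth. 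Consequently $F$, although arithmetically positive in this branch, fails to be arithmetically complete; this is the operative conclusion, and the lemma is most accurately read as asserting that $F$ is not arithmetically \emph{complete}.

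The main obstacle is precisely case (ii): because the weighted-leading form is strictly positive, the scaling argument that drives case (i) is unavailable, and one is forced to replace the ``unbounded below'' conclusion by the density/sparsity estimate of Lemma \ref{bdlem}. A subsidiary point requiring care is that the set of achievable ratios $x^3/y^2$ genuinely meets the sign-change interval of $\phi$; the explicit construction $x = \pm pq,\ y = pq^2$ settles this cleanly and also underlies the uniform bound $\F \asymp \rho^{12}$ used in the definite case. As with the analogous perfect-square dichotomy in the proof of Proposition \ref{MP2}, both branches terminate in the conclusion that $F$ cannot be arithmetically complete.
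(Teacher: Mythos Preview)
Your argument is correct, and in fact more complete than what the paper offers: the paper gives no proof at all for Lemma \ref{leadram2}, merely pointing to the analogous step in the proof of Proposition \ref{MP2}. Your weighted-homogeneity framing (weights $2$ and $3$, so that $\F$ has weight $12$ and every other monomial in (\ref{calF0}) has weight at most $11$) together with the observation that every rational is of the form $x^3/y^2$ is a clean and self-contained replacement for the size-comparison argument the paper has in mind from \S4.

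You are also right about case (ii). When $\G(m,n)=a_2 m^2 + a_1 mn + a_0 n^2$ is positive definite, one has $\F(x,y)=\G(x^3,y^2)\gg x^6+y^4$, and the weight bound on the remaining terms gives $F(x,y)\gg \max\{|x|,|y|\}^4$ for large $|x|+|y|$; so $F$ is bounded below and the literal conclusion ``not arithmetically positive'' fails. The correct conclusion in that branch is dearth via Lemma \ref{bdlem}, hence ``not arithmetically complete'', which is exactly what the paper needs in order to pass to the perfect-square shape $\F(x,y)=a(\alpha_2 x^3-\alpha_1 y^2)^2$. This matches the phrasing the paper itself uses at the parallel point in \S3 (``If $v_k(z)$ is not a perfect square over $\bR$, then $F$ fails to be arithmetically complete''), so your reading of the lemma is the intended one; the word ``positive'' in the statement of Lemma \ref{leadram2} is a slip.
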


Hence we must have
\[\F(x,y) = a (\alpha_2 x^3 - \alpha_1 y^2)^2.\]
with $\alpha_2, \alpha_1 \ne 0$. Replacing $x$ with $-x$ if necessary we may freely choose the sign of $\alpha_2$, so we may assume that $\alpha_1, \alpha_2 > 0$. \\

Setting $y$ to the nearest integer to $\sqrt{|\alpha_2 \alpha_1^{-1} x^3|}$, we can make $|\F(x,y)| = O(x^4)$ while 
\[|xyL_1(x^3, y^2)| \gg |x|^{11/2}\] 
unless $L_1(x,y)$ is proportional to $\alpha_2 x^3 - \alpha_1 y^2$. We come to the same conclusion regarding $L_2, L_3, L_4$ by comparing sizes. This restriction again allows us to write
\[F(x,y) = a^\prime \left((\alpha_2 x^3 - \alpha_1 y^2) + \beta_1 xy + \beta_2 x^2 + \beta_3 x + \beta_4 y\right)^2 + G(x,y).\]
We now seek a substitution of the form 
\[y = \gamma_3 x^{\frac{3}{2}} + \gamma_2 x + \gamma_1 x^{\frac{1}{2}} + \gamma_0\]
which makes the term 
\[(\alpha_2 x^3 - \alpha_1 y^2) + \beta_1 xy + \beta_2 x^2 + \beta_3 x + \beta_4 y\]
as small as possible. Expanding, we obtain
\begin{align*} & \alpha_2 x^3 - \alpha_1 y^2 \\
& = \alpha_2 x^3 - \alpha_1 \bigg(\gamma_3^2 x^3 - (\gamma_2^2 + 2 \gamma_3 \gamma_1) x^2 -( \gamma_1^2 + 2 \gamma_2 \gamma_0) x - 2 \gamma_3 \gamma_2 x^{\frac{5}{2}} - 2 (\gamma_1 \gamma_2  + \gamma_3 \gamma_0) x^{\frac{3}{2}}  - 2 \gamma_1 \gamma_0 x^{\frac{1}{2}} - \gamma_0^2 \bigg).
\end{align*}
Comparing the $x^3, x^{\frac{5}{2}}, x^2$ with $ \beta_1 xy + \beta_2 x^2 + \beta_3 x + \beta_4 y$ terms and setting them to zero, we obtain the equations 
\[\alpha_2 = \alpha_1 \gamma_3^2, -2 \alpha_1 \gamma_2 \gamma_3 = \beta_1 \gamma_3, -\alpha_1(\gamma_2^2 +2\gamma_3 \gamma_1) = \beta_1 \gamma_2 + \beta_2 \gamma_3.\]
The first equation gives 
\[\gamma_3 = \sqrt{\frac{\alpha_2}{\alpha_1}}\]
and the second gives 
\[\gamma_2 = -\frac{\beta_1}{2\alpha_1}.\]
Substituting these into the third equation gives
\[\gamma_1 = - \frac{\beta_1^2}{8 \sqrt{\alpha_1 \alpha_2}} + \frac{\beta_2}{2}.\]
We then leave $\gamma_0$ free to ensure that $y \in \bZ$. This choice of $y$ then ensures that 
\[\left \lvert \alpha_2 x^3 - \alpha_1 y^2 + \beta_1 xy + \beta_2 x^2 + \beta_3 x + \beta_4 y\right \rvert = O(|x|^{3/2}).\]
Now consider the cubic curve defined by the equation
\[\C :  \alpha_2 x^3 - \alpha_1 y^2 + \beta_1 xy + \beta_2 x^2 + \beta_3 x + \beta_4 y = 0.\]
There exists a $\delta > 0$ such that there are infinitely many integral points $(u,v)$ which are a distance $O(\max\{|u|, |v|\}^{-\delta})$ from $\C$, which then implies that there are infinitely many integers $u,v$ such that 
\[\left \lvert \alpha_2 x^3 - \alpha_1 y^2 + \beta_1 xy + \beta_2 x^2 + \beta_3 x + \beta_4 y\right \rvert = O(|x|^{3/2 - \delta}).\]
Thus, all surviving terms in $G$ must be $o(|x|^3)$. This implies that we can also absorb the $\alpha_2^\prime x^3 - \alpha_1^\prime y^2$ term in $G$ into the square, resulting in 
\[F(x,y) =  \left( \alpha_2 x^3 - \alpha_1 y^2 + \beta_1 xy + \beta_2 x^2 + \beta_3 x + \beta_4 y + \beta_5 \right )^2 + G^\prime(x,y),\]
where each monomial appearing in $G^\prime$ has order of magnitude $O(|x|^{5/2})$. \\

After a rational change of coordinates we may express $F$ as 
\begin{equation} \label{ECform} F(x,y) = a_1 \left( y^2 - x^3 - b_1 x - b_0\right)^2 + \G(x,y).\end{equation}

We then have the following lemma:

\begin{lemma} \label{Rouse} For $F$ given as in (\ref{ECform}), $F$ is not arithmetically complete. \end{lemma}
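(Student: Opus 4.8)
\textbf{Proof proposal for Lemma \ref{Rouse}.}

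The plan is to show that the values $n = F(x,y)$ with $N \le n < 2N$ are too sparse for $F$ to be arithmetically complete, by exploiting the structure $F = a_1 E(x,y)^2 + \G$ where $E(x,y) = y^2 - x^3 - b_1 x - b_0$ is (up to scaling) a Weierstrass cubic and $\deg \G \le 2$. The key point, as signalled in the outline, is that the behaviour splits according to whether $b_1 = 0$ or $b_1 \ne 0$. First I would reduce to the regime where $|E(x,y)| = O(1)$: if $|E(x,y)| \gg |x|^{\epsilon}$ (equivalently, is bounded below by a positive power of $\max\{|x|,|y|\}$) for all large integral points, then $F(x,y) \gg E(x,y)^2 \gg \max\{|x|,|y|\}^{1+\delta}$ for a suitable $\delta>0$, and Lemma \ref{bdlem} already gives the desired sparsity. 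So the crux is the set of integral points lying very close to the affine curve $E(x,y)=0$, i.e. integral $(x,y)$ with $y^2 - x^3 - b_1 x - b_0 = O(1)$.

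For such points, $y^2 = x^3 + b_1 x + b_0 + O(1)$, so $x^3 + b_1 x + b_0$ is within $O(|y|)$ of a perfect square; equivalently, writing $k = E(x,y)$, we are looking at integral points on the finitely many curves $y^2 = x^3 + b_1 x + b_0 + k$ for $|k| = O(1)$. When $b_1 \ne 0$ these are (generically) nonsingular elliteptic curves, and one expects only $O(\log N)$ such integral points with $x \ll N^{1/3}$ — but this kind of bound, while true, is not quite what is needed, and more importantly it does not by itself control the values of $\G(x,y)$ along them. This is where the construction of Rouse \cite{XQ} enters: the idea is to produce, for infinitely many $x$, an integral $y$ making $|E(x,y)|$ as small as the reduction theory of the quadratic form in $y$ permits — i.e. $|E(x,y)| = O(1)$ — while simultaneously forcing $\G(x,y)$ to take a negative value of order $\gg |x|$ (or $\gg |x|^2$, matching $\deg \G$) on this subsequence. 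Concretely, since $\G$ has no common component with $E^2$, $\G$ does not vanish identically on the strip $|E(x,y)| = O(1)$, and Rouse's construction chooses the parameters of an auxiliary family so that along it $a_1 E(x,y)^2 = O(1)$ dominates neither the sign nor the size of $F$; then $F(x,y) = \G(x,y) + O(1)$, which is negative and unbounded below, contradicting arithmetic positivity. In the case $b_1 = 0$, the curve is $y^2 = x^3 + b_0$, and one invokes instead the Pell-type polynomial identity of Danilov \cite{Dan} (the one giving the lower bound in Hall's conjecture): Danilov's identity furnishes an explicit two-parameter polynomial family $(x(t), y(t))$ with $y(t)^2 - x(t)^3$ of degree small relative to $x(t)$, which likewise lets us keep $a_1 E^2$ bounded (or at least $o(|x|)$) while $\G$ runs through negative values of unbounded magnitude, again killing arithmetic positivity.

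The remaining step is to package these two cases uniformly: after the reduction via Lemma \ref{bdlem}, $F$ is either not arithmetically positive (by the Rouse construction when $b_1 \ne 0$, or by Danilov's identity when $b_1 = 0$), or else the surviving integral points are confined to $|E(x,y)| = O(1)$ with $\G$ of constant positive sign along that strip, in which case $F(x,y) \gg \max\{|x|,|y|\}^{?}$ fails to hold and one instead observes directly that the number of values produced is controlled: $x$ ranges over $\ll N^{1/3}$ values and, for each, $y$ is pinned to $O(1)$ choices by $|E(x,y)| = O(1)$, so $\#\R_F([N,2N)) \ll N^{1/3} = o(N/\sqrt{\log N})$, and $F$ is dearth. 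Either way $F$ is not arithmetically complete.

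\textbf{Main obstacle.} The hard part will be the $b_1 \ne 0$ case and Rouse's construction: one must make an explicit choice of an auxiliary parametrised family (or a descent on a carefully chosen quadratic twist / isogenous curve) that genuinely forces $\G$ negative and large along a subsequence on which $|y^2 - x^3 - b_1 x - b_0|$ stays bounded — the difficulty being that controlling $E(x,y)$ to be small is a Diophantine-approximation condition (how close can $x^3 + b_1x + b_0$ be to a square?), whereas controlling the sign and size of the lower-order $\G$ is an independent condition, and reconciling the two is exactly the phenomenon Tao flagged as "very tough to analyse" for polynomials of the shape $(x^2 - y^3 - y)^4 - y + C$.
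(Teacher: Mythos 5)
Your overall architecture matches the paper's (split on the sign of $\G$ near the curve $E(x,y)=y^2-x^3-b_1x-b_0=0$, use Lemma \ref{bdlem} in the positive case, and use the Rouse construction for $b_1\ne 0$ and Danilov for $b_1=0$ to produce integral points near the curve in the negative case), but the quantitative heart of your argument is wrong. You reduce to, and then claim to produce, infinitely many integral points with $|E(x,y)|=O(1)$. That is unattainable: the points with $|E(x,y)|\le C$ are exactly the integral points on the finitely many curves $y^2=x^3+b_1x+b_0+k$, $|k|\le C$, and by Siegel's theorem each has only finitely many integral points (and, in the direction of Hall's conjecture, one expects $|y^2-x^3|\gg x^{1/2-\epsilon}$). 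Neither cited construction does what you assert: Rouse's family (\ref{rouse}) with the point $3P=(x_r,y_r)$, $x_r\asymp r^6$, gives $|E(x_r,y_r)|=|b_1^2r^2-b_0|\asymp x_r^{1/3}$, so $a_1E^2\asymp x^{2/3}$, not $O(1)$; Danilov gives $|y^2-x^3|\sim Cx^{1/2}$, so $a_1E^2\asymp x$, not bounded or $o(|x|)$. The paper's actual point is that this weaker smallness suffices: on the region $|E|=O(x^{1/3})$ one has $y\asymp x^{3/2}$, so a non-constant $\G$ that is negative there tends to $-\infty$ at least like a positive power of $x$ and dominates $a_1E^2=O(x^{2/3})$ (resp.\ $\asymp x$ with a controlled constant in the $b_1=0$ case), whence $\inf F=-\infty$. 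Your version, by demanding $|E|=O(1)$, both asks for something false and misses the reason the construction closes the argument. Relatedly, the construction does not "force $\G$ negative and large" --- the sign of $\G$ on the region is fixed by the case hypothesis; the construction only has to land points in the region.

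Your opening reduction is also not valid as stated: from $|E(x,y)|\gg|x|^{\epsilon}$ you cannot conclude $F\gg E^2$, because $\G$ may be negative with $|\G|$ as large as roughly $|x|^{5/2}$ along $y\approx x^{3/2}$ (e.g.\ an $xy$ monomial), which swamps $E^2\asymp x^{2\epsilon}$ for small $\epsilon$; nor does $E^2\gg x^{2\epsilon}$ give the exponent $1+\delta$ in $\max\{|x|,|y|\}$ needed for Lemma \ref{bdlem}. The correct dichotomy, as in the paper, is taken on the sign of $\G$ on the region $|E(x,y)|=O(x^{1/3})$, with Lemma \ref{bdlem} (or Lemma \ref{single} when $\G$ is constant) handling the nonnegative case and the Rouse/Danilov points handling the negative case. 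Your final counting remark ($x\ll N^{1/3}$, $y$ pinned to $O(1)$ values) is harmless but rests on the same miscalibrated region and is not needed once the dichotomy is set up correctly.
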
 

\begin{proof} The claim is clear if $\G(x,y)$ is constant by Lemma \ref{single}, so we may assume that $\G$ is non-constant. If $\G(x,y)$ is positive on the region defined by $|y^2 - x^3 - b_1 x - b_0| = O(x^{1/3})$, then we are done by Lemma \ref{bdlem}. Otherwise, we may assume that $\G$ is negative and tending to $-\infty$ on this region. \\ 

The key to this argument is to show that 
\[|y^2 - x^3 - b_1 x - b_0| = O(x^{1/3})\]
infinitely often for $x,y \in \bZ$. For $b_1 \ne 0$ we have the following construction due to Jeremy Rouse, posted on MathOverflow \cite{XQ}. Consider the elliptic curve defined by 
\begin{equation} \label{rouse} E_r : y^2 = x^3 + b_1 x + r^2 b_1^2,\end{equation}
where $r$ is a parameter we choose later. Note that the curve $E_r$ in (\ref{rouse}) has the obvious point $P = (0, rb_1)$. Using the addition law on $E_r$, we compute $3P$ to be 
\[3P = (64b_1^2 r^6 + 8b_1 r^2, 512 b_1^3 r^9 + 96 b_1^2 r^5 + 3b_1 r) = (x_r, y_r).\]
Since the power of $r$ exceeds that of the corresponding power of $b_1$ in each monomial, we can choose infinitely many $r \in \bZ$ to ensure that $3P \in \bZ^2$. Since $3P \in E_r$ it follows that 
\[|y_r^2 - x_r^3 - b_1 x_r - b_0| = |b_1^2 r^2 - b_0| = O(r^2) = O(x^{1/3}).\] 
It follows that there are infinitely many integers $x$ such that $F(x,y)$ is negative. \\

If $b_1 = 0$ then the above argument will not work, since for any $r$ the $x$-coordinate of $3P$ will remain zero. However we have a different construction due to Danilov (see also \cite{Elk}), which gives that there are infinitely many integers $x,y$ such that 
\[\left \lvert y^2 - x^3 \right \vert \sim C x^{\frac{1}{2}}, C = 5^{-5/2} 54 < 1.\]
Therefore, provided that $\G$ is non-constant and negative definite in this region, we will have $\inf_{(x,y) \in \bZ^2} F(x,y) = -\infty$ and hence cannot be arithmetically positive. This completes the proof. 
\end{proof}

\end{document}